\numberwithin{equation}{section}
\newcommand{\dist}{\operatorname{dist}}
\newcommand{\diff}{\operatorname{Diff}}
\newcommand{\diam}{\operatorname{diam}}
\newcommand{\supp}{\operatorname{supp}}
\newcommand{\e} {\varepsilon}
\def \TT {{\mathbb T}}
\def \RR {{\mathbb R}}
\newcommand{\Mane}{Ma\~n\'e}
\newcommand{\Diaz}{D\'\i az}
\newcommand{\eg}{e.g., }
\def \cN {{\mathcal N}}
\def \UL {\mathcal U_L}
\newtheorem{theorem}{Theorem}[section]
\newtheorem{remark}[theorem]{Remark}
\newtheorem{question}[theorem]{Question}
\newtheorem{conjecture}[theorem]{Conjecture}
\newtheorem{corollary}[theorem]{Corollary}
\newtheorem{lemma}[theorem]{Lemma}
\newtheorem{proposition}[theorem]{Proposition}
\newtheorem{definition}[theorem]{Definition}
\newtheorem{example}[theorem]{Example}
\def\ie{ i.e., }
\begin{document}

\thanks{ }


\thanks{}

\keywords{}

\subjclass{Primary: 37D25. Secondary: 37D30, 37D35.}

\renewcommand{\subjclassname}{\textup{2000} Mathematics Subject Classification}


\setcounter{tocdepth}{2}

\title[Center Lyapunov exponents]{Center Lyapunov exponents in partially hyperbolic dynamics}

\author{Andrey Gogolev and Ali Tahzibi}
\thanks{The first named author was partially supported by NSF grant DMS-120494. The first named author also would
like to acknowledge the support provided by DeanÕs Research Semester Award. The second named author had the fellowship of CNPq and partially supported by FAPESP}


\maketitle

\tableofcontents

\section{Introduction}

Let $M$ be a compact smooth Riemannian manifold. Denote by
$\diff^r(M)$, $r\ge1$, the space of $C^r$ diffeomorphisms of $M$
endowed with $C^r$ topology and by $\diff^r_m(M)$ the subspace that
consists of diffeomorphisms which preserve volume $m$.

\begin{conjecture}[Pesin~\cite{pesin, pes07}] Consider a diffeomorphism $f\in\diff^r_m(M)$, $r>1$. Then arbitrarily close to $f$ in
$\diff^r_m(M)$ there exists a diffeomorphism $g$, which has nonzero Lyapunov exponents on a set of positive volume. Moreover, there is an open set $U$ containing $g$ and a residual subset of $U$ such that any $h$ in this subset has non-zero Lyapunov exponents on a positive measure set.
\end{conjecture}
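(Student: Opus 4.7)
The plan is to reduce Pesin's conjecture to a controlled perturbative setting in which one can force at least some Lyapunov exponents to become non-zero on a set of positive volume, and then upgrade the construction to an open set plus residual statement.

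First, I would invoke the Bochi--Viana dichotomy for volume-preserving $C^1$ diffeomorphisms: either Oseledec exponents vanish almost everywhere or there is a non-trivial dominated splitting. After an arbitrarily small perturbation, one may therefore assume $f$ admits a dominated splitting $TM = E_1 \oplus \cdots \oplus E_k$. Hyperbolic sub-bundles already contribute non-zero exponents, so the obstruction lies in the ``neutral'' blocks, and I would focus on the partially hyperbolic case $TM = E^s \oplus E^c \oplus E^u$ where the center bundle is the source of the difficulty.

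Second, I would mimic the Shub--Wilkinson construction. Near a periodic point of $f$ I would insert a local volume-preserving shear along $E^c$ supported in a small tube, which provably raises the integrated center exponent over the orbit. By invariance of domain for the dominated splitting and by upper semi-continuity of the top exponent in $L^1$, this localized gain can be spread to a set of positive volume using the partial-hyperbolicity machinery (accessibility, invariance of center-stable/unstable holonomies, and the invariance principle of Avila--Viana). The net effect is a nearby $g$ whose sum of center exponents is strictly positive on a set of positive volume, which combined with the hyperbolic directions yields non-zero Lyapunov exponents on a set of positive volume.

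Third, for the open-plus-residual part, I would note that the dominated splitting and the lower bound on the integrated center exponent are persistent under small perturbations, so the condition ``some Lyapunov exponent exceeds $1/n$ on a set of measure at least $1/n$'' is a $C^r$-open condition. Intersecting over $n$ and taking a countable union of such open sets gives the desired residual subset inside a $C^r$ neighborhood of $g$.

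The main obstacle, and the reason this remains a conjecture in full generality, is the passage from $C^1$ to genuinely $C^r$, $r>1$: the Bochi--Viana dichotomy and the shear-type perturbations are $C^1$ tools, whereas Pesin's conjecture demands a $C^r$-small perturbation. Either one must build a $C^r$-smooth analogue of the Shub--Wilkinson shear that nonetheless produces a definite jump in a center exponent, or one must restrict to a $C^r$-dense class (for instance partially hyperbolic diffeomorphisms whose center is one-dimensional, or those exhibiting a blender) where the center exponent can be pushed away from zero by an explicit $C^r$-small deformation. A secondary difficulty is controlling all center exponents simultaneously, not just their sum, so that no exponent remains pinned to zero after the perturbation.
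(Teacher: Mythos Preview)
The statement you are attempting to prove is labelled as a \emph{Conjecture} in the paper, and the paper provides no proof of it; it is presented as an open problem motivating the survey. So there is no ``paper's own proof'' to compare against, and your proposal is, as you yourself recognize in the final paragraph, a heuristic strategy rather than a proof.

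The genuine gap is exactly the one you name but do not resolve: every perturbative tool you invoke is a $C^1$ tool. The Bochi--Viana dichotomy is a statement about $C^1$-generic diffeomorphisms, and it does not allow you to conclude that an arbitrarily $C^r$-small perturbation of a given $f$ admits a dominated splitting; indeed the paper stresses that the $C^r$, $r>1$, topology is crucial and that the $C^1$ picture (Ma\~n\'e--Bochi) is qualitatively different. Likewise the Shub--Wilkinson/Baraviera--Bonatti shear perturbations move the integrated center exponent, but the Baraviera--Bonatti construction is explicitly $C^1$-local (the rescaled bump $h_r$ has $\|h_r - Id\|_{C^1}$ bounded while $\|h_r - Id\|_{C^k}$ blows up for $k>1$), and the paper poses as an open question whether a $C^r$-local perturbation can remove a zero center exponent. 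So your step two does not produce a $C^r$-small perturbation in general.

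Your step three is also not justified as stated: continuity of the integrated exponent under a dominated splitting gives openness of $\int \lambda^c \, dm \neq 0$, but the condition ``some Lyapunov exponent exceeds $1/n$ on a set of measure at least $1/n$'' is a pointwise condition, and Lyapunov exponents are not in general upper or lower semicontinuous as functions of the diffeomorphism without further structure. You would need to stay inside the class where the dominated splitting persists and argue via the integrated exponent, which again presupposes the dominated splitting you have not produced by a $C^r$-small perturbation. In short, the proposal correctly identifies the architecture one would like, but the load-bearing step---passing from $C^1$ perturbation technology to $C^r$---is precisely the content of the conjecture and remains open.
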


This conjecture was motivated by the success of Pesin theory which
studies diffeomorphisms with non-zero exponents and, in particular,
establishes existence of an ergodic component of positive volume.

Pesin theory associates a rather subtle structure to a
diffeomorphism with non-zero exponents: Pesin sets, local stable and
unstable manifolds which vary only measurably and whose size
deteriorates along the orbits. Even though it is a widespread belief
that non-uniformly hyperbolic diffeomorphisms that carry this
structure are abundant in $\diff^r_m(M)$, concrete examples of such
diffeomorphisms are not very easy to come by. The subject of this
survey is partially hyperbolic
diffeomorphisms with non-zero center exponents. These systems
 provide a 
rather hands-on setting where some of the features of genuinely
non-uniformly hyperbolic behavior are present.

The $C^{r}, r>1$, topology in the second part of the above conjecture is crucial. By
\Mane-Bochi's result we have that $C^1-$generic volume preserving
diffeomorphisms of any surface is either Anosov or the Lyapunov
exponents of almost all points vanish. As the $2-$torus is the only
surface admitting Anosov diffeomorphism we conclude that for a
$C^1-$generic volume preserving diffeomorphism of any surface
different from the $2-$torus, the Lyapunov exponents of almost all points
vanish. From ergodic point of view $C^1$ dynamics is very different
since stable and unstable foliations could be non-absolutely continuous, which results in the failure of Pesin theory. We
remark that it is not even known whether or not $C^1$ Anosov
diffeomorphisms are ergodic and it is likely that they are not.

For large $r$ ($r>2\dim M+1$) one
cannot expect to find a dense set of diffeomorphisms whose exponents
are non-zero on a set of full volume since there are diffeomorphisms
with persistent regions that are occupied by codimension 1 invariant
tori (see \cite{CS, He, X, Y}). Still, for small $r$, e.g., $r=2$, one can hope for a dense
set of non-uniformly hyperbolic diffeomorphisms.

This survey is by no means a comprehensive one. Our goal is to give an overview of the field and explain several  major ideas in simplified setups. Many important topics were omitted or only touched upon briefly. We assume that the reader is familiar with the basic definitions and results from partially hyperbolic dynamics. This can be found in many sources, \eg \cite{PesHass, Pes2, RHRHU}.

{\bfseries Acknowledgements.} We thank Amie Wilkinson who suggested in the first place that we write this survey. We would like to thank Anton Gorodetski and Victor Kleptsyn for useful communications. We also thank the referee for useful remarks which helped to improve our exposition.

\section{Abundance of non-zero Lyapunov exponents}
Consider a partially hyperbolic diffeomorphism $f$ of a 3-manifold $M$
with a fixed Finsler metric. By the Oseledets Theorem, the Lyapunov exponents
$$
\lambda^\sigma(f)(x)=\lim_{n\to\infty}\frac1n\log
J^\sigma(f^n)(x),\;\; \sigma=s, c, u,
$$
are well defined a.e. with respect to an invariant measure $\mu$. If $\mu$ is ergodic then by the Birkhoff
theorem
\begin{equation}
 \label{eq_exponent_formula}
\lambda^\sigma(f)(x)=\lim_{n\to\infty}\frac1n\sum_{i=0}^{n-1}\log J^\sigma(f)(f^ix)=\int_M
\log J^\sigma(f) d\mu,\;\; \sigma=s, c, u,
\end{equation}
for a.e. $x$. Using the last expression one can easily check that
the Lyapunov exponents are independent of the choice of the Finsler
metric. Also, this formula implies that $\lambda^\sigma$, $\sigma=s, c, u$, depend
continuously on the diffeomorphism from $\diff_\mu^r(M)$ in $C^1$ topology.


Now consider a partially hyperbolic diffeomorphism $f \colon M \rightarrow M$ of an $n$-dimensional manifold 
that preserves the splitting $TM = E^s \oplus \, E^c \oplus \, E^u$ with $\dim (E^{\sigma}) = d_{\sigma}, \sigma =s, c, u$.  Again by the Oseledets Theorem for a.e. point $x \in M$ the Lyapunov exponents $\lambda^{\sigma}_i, 1 \leq i \leq d_{\sigma}, \sigma = s, c, u$ exist and 
$$
\sum_{i=1}^{d_{\sigma}} \lambda_i^\sigma(f)(x)= \lim_{n\to\infty}\frac1n\log
J^\sigma(f^n)(x) = \lim_{n\to\infty}\frac1n\sum_{i=0}^{n-1}\log J^\sigma(f)(f^ix),
$$

A weaker form of  hyperbolicity is  the existence of a dominated splitting.  A $Df$-invariant splitting $TM = E_1 \oplus E_2 \cdots \oplus E_k$ is called {\it dominated} if there exists $l \in \mathbb{N}$ such that $$ \frac{\| Df^l (x) v\|}{\|Df^l(x)w\|} \leq 1/2 $$ for any $x \in M$ and any $v \in E_i, w \in E_{i+1}, 1\leq i < k.$

It is easy to conclude from the definition that the largest Lyapunov exponent corresponding to $E_i$ is strictly less than the smallest exponent in the $E_{i+1}$ direction. So any diffeomorphism with dominated splitting of $k$-subbundles admits at least $k$ distinct Lyapunov exponents for a.e. point. However, some or all Lyapunov exponents along a fixed subbundle of the dominated splitting may coincide.

Next we will be concerned with the important case when $\mu$ is a
smooth measure. We will discuss techniques for
removing zero center exponent by a small perturbation in
$\diff_\mu^r(M)$.

\subsection{Removing zero exponent for smooth measures by a global perturbation}
\label{section_basic_construction} 
 We describe the construction of Shub and Wilkinson~\cite{SW} and incorporate
 some of the simplifications introduced in~\cite[p. 139]{BDV}.  Start with an algebraic partially hyperbolic automorphism of $\TT^3$. It turns out that for certain very explicit perturbations the computations are workable and the center exponent can be estimated using~(\ref{eq_exponent_formula}). 

 Let
$A\colon\TT^2\to\TT^2$ be an Anosov automorphism with positive
eigenvalues $\lambda<\lambda^{-1}$ and let $L_0$ be a partially
hyperbolic automorphism of $\TT^3$ given by
\begin{equation}\label{L0}
L_0(x_1,x_2,y)=(A(x_1,x_2),y).
\end{equation}
Our goal is to construct a small  volume preserving perturbation $f$
of $L_0$ that has non-zero center exponent.

Denote by $e_s$, $e_c$ and $e_u$ the unit constant vector fields on
$\TT^3$ that correspond to the eigenvalues $\lambda$, $1$ and
$\lambda^{-1}$ respectively.

Consider a stably ergodic partially hyperbolic skew product
$L_\varphi\colon\TT^3\to\TT^3$ of the form
\begin{equation}\label{Lphi}
L_\varphi(x_1,x_2,y)=(A(x_1,x_2),y+\varphi(x_1,x_2)),
\end{equation}
where $\varphi\colon\TT^2\to\TT^1$ is a smooth function $C^r$ close
to 0.
\begin{remark}  Burns and Wilkinson~\cite{BW} showed that stably ergodic skew products are open and dense in the space of skew products.
\end{remark}

Also consider a fibration $\pi\colon\TT^3\to\TT^1$ given by
$$
\pi(x_1,x_2,y)=y-x_1.
$$
(If we think of the tori $y=const$ as ``horizontal tori" then the
fibers of $\pi$ are ``diagonal tori" inside $\TT^3$.)

There exist unique numbers $a$ and $b$ such that vector fields
$e_s+ae_c$ and $e_u+be_c$ are tangent to the fibers of $\pi$. Let
$\hat\psi\colon\TT^1\to\RR$ be a non-constant smooth function $C^r$
close to $0$. Lift $\hat\psi$ to $\psi\colon\TT^3\to\RR$ along
the fibers: $\psi=\hat\psi\circ\pi$. Finally define a diffeomorphism
$h\colon\TT^3\to\TT^3$ by the formula
$$
h(x_1,x_2,y)=(x_1,x_2,y)+\psi(x_1,x_2,y)(e_s+ae_c).
$$
Diffeomorphism $h$ is a small translation in the direction $e_s+ae_c$ which is
constant on the fibers of $\pi$.

Let $f=h\circ L_\varphi$. Diffeomorphism $f$ is partially hyperbolic because it is a
small perturbation of $L_\varphi$. The derivative of $f$ can be
conveniently computed in the coordinates $(e_s+ae_c, e_c,
e_u+be_c):$
$$
DL_\varphi(x_1,x_2,y)=
\left(
  \begin{array}{ccc}
    \lambda & 0 & 0 \\
    \alpha+\varphi_s(x_1,x_2) & 1 & \beta+\varphi_u(x_1,x_2) \\
    0 & 0 & \lambda^{-1} \\
  \end{array}
\right)
$$
where $\alpha=a(1-\lambda)$, $\beta=b(1-\lambda^{-1})$, $\varphi_s$ and
$\varphi_u$ are derivatives of $\varphi$ in the direction  $e_s$ and
$e_u$ respectively;
$$
Dh=
\left(
  \begin{array}{ccc}
    1 & \psi_c & 0 \\
    0 & 1 & 0 \\
    0 & 0 & 1 \\
  \end{array}
\right)
$$
where $\psi_c$ is the derivative of $\psi$ in the direction $e_c$; then
$$
Df=Dh\circ DL_\varphi=
\left(
  \begin{array}{ccc}
    \lambda+(\alpha+\varphi_s)\psi_c\circ L_\varphi & \psi_c\circ L_\varphi & (\beta+\varphi_s)\psi_c\circ L_\varphi \\
    \alpha+\varphi_s & 1 & \beta+\varphi_u \\
    0 & 0 & \lambda^{-1} \\
  \end{array}
\right)
$$

We can see that $f$ is volume preserving. Notice also that $f$
preserves center-stable distribution $E^{cs}$ of $L_0$ spanned by
$e_s$ and $e_c$. Since center-stable distribution of $f$ is the
unique continuous 2-dimensional $f$-invariant distribution $C^0$
close to $E^{cs}$ we have that $E^{cs}$ is indeed the center-stable
distribution of $f$. Therefore, the center distribution of $f$ is
spanned by a vector field of the form $(\e, 1, 0)^t$ (written in the
coordinates $(e_s+ae_c, e_c, e_u+be_c)$). Function
$\e\colon\TT^3\to\RR$ is continuous and takes values in a small
neighborhood of $0$.

We equip $T\TT^3$ with Finsler metric given by the sup-norm in the
coordinates $(e_s+ae_c, e_c, e_u+be_c)$ so that
$Df(\e,1,0)^t=J^c(f)(\e\circ f,1,0)^t$. By a direct
computation
$$
Df\left(
     \begin{array}{c}
       \e \\
       1 \\
       0 \\
     \end{array}
   \right)
   =
   \left(
     \begin{array}{c}
       \e\lambda+\e(\alpha+\varphi_s)\psi_c\circ L_\varphi+\psi_c\circ L_\varphi \\
       \e(\alpha+\varphi_s)+1 \\
       0 \\
     \end{array}
   \right)
$$
Thus we obtain that $J^c(f)=1+\e(\alpha+\varphi_s)$ and
$$
\e\circ f=\psi_c\circ L_\varphi +\frac{\e\lambda}{J^c(f)}.
$$
From the last equation we see that $\e$ is a non-zero function.

Because $\alpha+\varphi_s>0$, we have that $J^c(f)>1$ when $\e>0$, and
$J^c(f)<1$ when $\e<0$. It follows that
$\e\lambda/J^c(f)\le\e\lambda$ and the equality holds if and only if
$\e=0$. Therefore $\e\circ f\le \psi_c\circ L_\varphi+\e\lambda$ and
after integrating we have a strict inequality:
$$
\int_{\TT^3}\e\circ f dm<\int_{\TT^3}\psi_c\circ L_\varphi
dm+\lambda\int_{\TT^3}\e dm=\lambda\int_{\TT^3}\e dm.
$$
Because $\lambda\in(0,1)$ we conclude that $\int_{\TT^3}\e dm<0$.

Now we can estimate the center Lyapunov exponent
\begin{multline}\label{eq_positive_exponent}
\lambda^c(f)=\int_{\TT^3}\log J^c(f) dm=\int_{\TT^3}\log
(1+\e(\alpha+\varphi_s)) dm\\<\int_{\TT^3}\e(\alpha+\varphi_s)
dm=\alpha\int_{\TT^3}\e dm<0.
\end{multline}
 
\subsection{Removing zero exponent for smooth measures by localized $C^1$-perturbations for general partially hyperbolic diffeomorphisms} 
\label{subs_BB}
Here we will work with general partially hyperbolic diffeomorphisms and expose a result of Baraviera and Bonatti~\cite{BB} that allows to perturb the center exponent
by a $C^1$-local perturbation. Their method allows to move any (center) exponent, not necessarily a zero exponent. Here the context is wider than that of the previous subsection 
but the perturbations are in $C^1$-topology. Similar construction in a more restricted setting was carried out by Dolgopyat~\cite[Appendix E]{D04}.

\begin{theorem}[\cite{BB}]
\label{th_BB}
 Let $(M, m)$ be a compact manifold endowed with a $C^r$ volume $m$, $r \geq 2$. Let $f$ be a $C^1$ $m$-preserving diffeomorphism that admits a dominated splitting $TM = E_1 \oplus \cdots \oplus E_k, k > 1$. Then there are $m$-preserving  $C^r$-diffeomorphisms $g,$ arbitrary $C^1$-close to $f$ for which 
 $$
 \displaystyle{\int_M \log J^i(g) (x) dm \neq 0}
 $$
 where $J^i(g)(x) := | \det Dg(x)| _{E_i} |.$
\end{theorem}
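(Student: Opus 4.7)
The plan is to show that the functional $g \mapsto \int_M \log J^i(g)\,dm$ is not locally constant equal to zero at $f$ for at least one index $i$, via a localized volume-preserving perturbation whose effect on the integral is controlled by a linear-algebra lemma. If $\int_M \log J^i(f)\,dm$ is already nonzero for some $i$ take $g = f$; otherwise, by grouping the subbundles of the dominated splitting into $E_1 \oplus (E_2 \oplus \cdots \oplus E_k)$, it suffices to treat the case $k=2$ and to produce arbitrarily $C^1$-close $g$ with $\int_M \log J^1(g)\,dm < 0$.

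First I would pick a Lyapunov-regular point $p$ and a small ball $B_\varepsilon$ around $p$ whose iterates $f^n(B_\varepsilon)$, for $1 \leq |n| \leq N$, are pairwise disjoint from $B_\varepsilon$. In a chart on $B_\varepsilon$ where $E_1$ and $E_2$ appear as essentially constant subspaces, construct an $m$-preserving $C^r$ isotopy $R_t : M \to M$, supported in $B_\varepsilon$, whose derivative at $p$ is a small volume-preserving rotation by angle $t$ mixing $E_1(p)$ and $E_2(p)$ (acting trivially on any complement). Set $g_t := R_t \circ f$, which is $C^1$-close to $f$ when $t$ and $\varepsilon$ are small.

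The core step, which I expect to be the main obstacle, is the following linear-algebra claim. Let $L$ be a volume-preserving linear endomorphism admitting a dominated splitting $V_1 \oplus V_2$, and let $\rho_t$ be a one-parameter family of volume-preserving linear maps with $\rho_0 = \mathrm{Id}$ that rotates nontrivially between $V_1$ and $V_2$; denote by $V_1(t)$ the dominated subspace of $\rho_t L$ close to $V_1$. Then
\begin{equation*}
\frac{d^2}{dt^2}\bigg|_{t=0} \log \bigl|\det(\rho_t L)\big|_{V_1(t)}\bigr| \;<\; 0.
\end{equation*}
The heuristic is that $V_1(t)$ drifts only to first order in $t$, so the second-order contribution to the Jacobian on the moving subspace is captured by the $\rho_t$-action along $V_1$ itself; the area-preservation of $\rho_t$ combined with the spectral gap enforced by domination then forces any first-order tilt toward $V_2$ to be paid for by a strict second-order contraction along $V_1$. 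Diagonalizing $L$ on $V_1 \oplus V_2$ and expanding in $t$ should make the sign explicit.

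Assuming the lemma, the global conclusion follows by observing that $g_t$ differs from $f$ only on $B_\varepsilon$, and on that set the integrand $\log J^1(g_t) - \log J^1(f)$ is precisely the local quantity estimated by the lemma. Integrating and summing the $+t$ and $-t$ perturbations yields
\begin{equation*}
\int_M \log J^1(g_t)\,dm + \int_M \log J^1(g_{-t})\,dm \;<\; 2\int_M \log J^1(f)\,dm
\end{equation*}
for all small $t \neq 0$. Hence at least one of $g_t, g_{-t}$ satisfies $\int_M \log J^1 < \int_M \log J^1(f) = 0$, producing the required $g$ in every prescribed $C^1$-neighborhood of $f$.
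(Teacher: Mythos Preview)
Your proposal has a genuine gap at the sentence ``$g_t$ differs from $f$ only on $B_\varepsilon$, and on that set the integrand $\log J^1(g_t) - \log J^1(f)$ is precisely the local quantity estimated by the lemma.'' The Jacobian $J^1(g_t)(x)$ is computed on the invariant bundle $E_1^{g_t}(x)$, and this bundle is determined by the whole derivative cocycle along the orbit of $x$, not by $Dg_t(x)$ alone. Consequently $E_1^{g_t}$ differs from $E_1^{f}$ at \emph{every} point whose orbit meets $B_\varepsilon$, so the difference of integrands is not supported in $B_\varepsilon$. For the same reason, your linear-algebra lemma is off target: the ``dominated subspace of $\rho_t L$'' for the single linear map $\rho_t L$ has no a priori relation to $E_1^{g_t}(p)$, which depends on iterates of $Dg_t$ along the full orbit of $p$.

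This non-locality is exactly the issue the paper confronts. Its perturbation is $f_r=f\circ h_r$ with $h_r$ a rescaled fixed map supported in $B_r$, and the main term is $\int_{B(0,1)}\log h^u\,dm<0$, obtained not by a second-derivative computation but by Jensen's inequality along unstable segments (using that $h$ is volume-preserving and equals the identity near $\partial B(0,1)$). To compare $\int\log J^u(f_r)$ with this local quantity, the paper replaces $J^u(f_r)$ by an auxiliary cocycle $\lambda_r$ with the same integral: $\lambda_r$ coincides with the unperturbed Jacobian outside $B_r\cup f^{-1}(B_r)$, carries the projected contribution of $Dh_r$ on $B_r$, and on $f^{-1}(B_r)$ absorbs a correction term $A(p)$ accounting for the accumulated drift between $E^c_f$ and $E^c_{f_r}$. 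Domination then gives $|\log A(p)|\le C\alpha^{n_r}$ with $n_r\to\infty$ as $r\to0$, so the correction is $o(\mathrm{vol}(B_r))$ and the strictly negative local term wins. Your outline is missing any mechanism playing the role of this auxiliary cocycle, and without it the conclusion does not follow.
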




Recall that $\displaystyle{\int_M \log J^i(g) (x) dm}$  is equal to the integral of the sum of all Lyapunov exponents corresponding to the subbundle $E_i.$  For partially hyperbolic diffeomorphisms with one dimensional center bundle  the above theorem implies:
 \begin{corollary}
 There exists an open and dense subset $\mathcal{N}$ in the space of volume preserving partially hyperbolic diffeomorphisms with one dimensional center such that for any $f \in \cN$ we have 
 $$\displaystyle{\int_M \lambda^{c}(f)(x) dm \neq 0.}$$
  \end{corollary}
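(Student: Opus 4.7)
The plan is to read this as a direct corollary of Theorem~\ref{th_BB} together with continuity of the integrated center Lyapunov exponent in the $C^1$ topology. The setup: for any $f$ in our space, the partially hyperbolic splitting $TM = E^s \oplus E^c \oplus E^u$ is a dominated splitting into $k=3$ subbundles, so Theorem~\ref{th_BB} applies with $i=2$, $E_2 = E^c$, and $J^2(g) = J^c(g)$. Because $\dim E^c = 1$, the Oseledets decomposition along $E^c$ is trivial and the unique center exponent satisfies $\lambda^c(f)(x) = \lim_{n\to\infty} \tfrac1n \log J^c(f^n)(x)$; integrating over $M$ and invoking $f$-invariance of $m$ through Birkhoff gives
$\int_M \lambda^c(f)\, dm = \int_M \log J^c(f)\, dm$.
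Thus it suffices to prove that
$\Phi \colon f \mapsto \int_M \log J^c(f)\, dm$
is nonzero on an open and dense subset of our space.

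For openness, I would observe that on the $C^1$-open set of partially hyperbolic diffeomorphisms the center bundle $E^c_f$ depends continuously, in the $C^0$ sense as a section of the Grassmannian bundle over $M$, on $f$ in the $C^1$ topology, and $Df$ itself depends continuously on $f$ in the $C^0$ topology. Combining these, $x \mapsto \log J^c(f)(x)$ depends continuously on $f$ in $C^1$ topology, uniformly in $x$, so $\Phi$ is $C^1$-continuous and $\mathcal{N} := \Phi^{-1}(\RR \setminus \{0\})$ is $C^1$-open, after intersecting with our space of one-dimensional-center partially hyperbolic diffeomorphisms.

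For density, I would invoke Theorem~\ref{th_BB} directly: given any $f$ in our space and any $C^1$-neighborhood $\mathcal{U}$ of $f$, the theorem produces a $C^r$, volume preserving $g \in \mathcal{U}$ with $\Phi(g) \neq 0$. Shrinking $\mathcal{U}$ if necessary so that every element is still partially hyperbolic with a one-dimensional center (possible by $C^1$-openness of partial hyperbolicity together with stability of the dominated splitting and its dimensions), one has $g \in \mathcal{N}$ and $g$ lies in our space, establishing density.

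Essentially all of the work has been absorbed into Theorem~\ref{th_BB}; the residual obstacle, and the only non-cosmetic step, is the $C^1$-continuity of $\Phi$, which reduces to the continuity of the bundle $E^c_f$ in $f$. This is standard for dominated splittings with fixed subbundle dimensions, so I do not anticipate difficulty. The identification $\int \lambda^c\, dm = \int \log J^c\, dm$ is where the one-dimensionality of $E^c$ is crucially used; without it one would only obtain information about the sum of center exponents, not about any individual one.
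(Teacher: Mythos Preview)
Your proposal is correct and follows exactly the approach the paper intends: the paper presents this corollary as an immediate consequence of Theorem~\ref{th_BB}, noting just before it that $\int_M \log J^i(g)\,dm$ equals the integrated sum of Lyapunov exponents along $E_i$, which for one-dimensional $E^c$ is precisely $\int_M \lambda^c\,dm$. You have simply supplied the routine details (continuity of $\Phi$ for openness, Theorem~\ref{th_BB} for density) that the paper leaves implicit.
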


Let us mention that  C. Liang, W. Sun and J. Yang \cite{LSY} showed that if $f$ is partially hyperbolic with 
$TM = E^s \oplus E^c \oplus E^u$ with $\dim (E^c) = d_c$ then there is a volume preserving diffeomorphism $g$ $C^1$-close to $f$ 
such that 

$$ \int_M \lambda^c_i(g) (x) dm \neq 0 \quad \text{for all} \quad 1 \leq i \leq d_c,$$
where $\lambda^c_i(g) (x), 1 \leq i \leq d_c$, are the Lyapunov exponents corresponding 
to the center bundle. The idea 
is to perturb $f$ to a diffeomorphism $f_1$ so that the center bundle has  finest dominated splitting $E^{c_1} \oplus \cdots \oplus E^{c_k}$ in robust fashion. This means that every diffeomorphism $g$ close to $f_1$ has the finest dominated splitting of $E^c$ into the same number of subbundles with dimensions $d_i, 1 \leq i \leq k.$ 
By Baraviera-Bonatti result we can also suppose that $\displaystyle{\int_M \log J^i(g) (x) dm} \neq 0$ 
for  any such $g$. Since $E^{c_i}$ are elements of the finest dominated splitting, using a Bochi-Viana's argument, it is possible to perform a $C^1$-perturbation of $f_1$ to obtain the desired property for Lyapunov exponents.

 Now we will present a detailed proof of the Baraviera-Bonatti's result in the setting where $f$ is a linear partially hyperbolic automorphism of $\mathbb{T}^3.$ Then we will explain what adjustments are needed to obtain the result in the full generality.
 
 Let $f \colon \mathbb{T}^3 \rightarrow \mathbb{T}^3$ be a linear partially hyperbolic diffeomorphism with eigenvalues $0 < \lambda_s < \lambda_c < \lambda_u$. Let  $E^s \oplus \, E^c \oplus \, E^u$ be the corresponding splitting. We also consider a coordinate system corresponding to the this splitting with notation $(x, y, z)$. 
  
 Our goal is to perturb $f$ in a small ball  $B_r$ of radius $r$ around a non-periodic point $p_0$ and obtain a new volume preserving diffeomorphism $f_r$ such that
  $${\displaystyle \int_{\mathbb{T}^3} \lambda^{c} ({f_r})(x) dm >  \int_{\mathbb{T}^3} \lambda^{c} (f)(x) dm= \log(\lambda_c).}$$
  
  In fact we construct a one parameter family of volume preserving local perturbations $f_r$, $r\in [0,r_0]$,
 of $f$ such that the  average of unstable Lyapunov exponent of $f_r$ (which is equal to 
$\displaystyle{\int \log J^u_{f_r} dm}$) is strictly less than the unstable exponent of $f$ and the stable 
Lyapunov exponent of $f$ remains unchanged (the claim about stable exponent can be guaranteed only when $f$ is linear) after perturbation. Because $f_r$ is volume preserving  the sum of the 
Lyapunov exponents must be zero and consequently 
$$\displaystyle{ \int_{\mathbb{T}^3} \lambda^c ({f_r}) (x) dm > \int_{\mathbb{T}^3} \lambda^{c}(f)(x) dm.}$$

  As mentioned above after a local perturbation of linear partially hyperbolic 
automorphism we can decrease the unstable exponent. It is interesting to point out that it is not possible to increase it by a perturbation. In fact in the 
next section we show that the unstable Lyapunov exponent of any partially hyperbolic diffeomorphism on 
$\mathbb{T}^3$ is less than or equal to the unstable exponent of its linearization. 
  

\subsubsection{Construction of the perturbation}
Let $h \colon B(0, 1) \rightarrow B(0, 1)$ be a volume preserving diffeomorphism coinciding with the identity map on a neighborhood of the boundary of the unit ball   and $B_r , 0 <  r < 1$ an small ball around a non-periodic point $p_0$. We denote by  $h_r = \phi_r^{-1} \circ h \circ \phi_r$ where $\phi_r : B(0, r) \rightarrow B(0, 1)$ is a homothety of ratio $1/r.$ By definition $h_r$ is supported on $B_r$. We view $B_r$ as ball of radius $r$ about $p_0$ and extend $h_r$ by identity to the rest of $\mathbb T^3$. Let $f_r = f \circ h_r$. Then $f_r$ is a small $C^1$ volume preserving perturbation of $f.$ Observe that $\|h_r - Id\|_{C^1} = \|h - Id\|_{C^1}$ but $\|h_r - Id\|_{C^k}, k > 1$ depends on $r$ and consequently this method does not provide $C^k$-perturbations to remove zero Lyapunov exponent. The following question  remains open:

\begin{question}
Is it possible to make a local $C^r$-perturbation $(r > 1)$ to remove zero Lyapunov exponent of the volume?
\end{question}

We require that $h$ preserves $E^s$ coordinate, that is, $ \pi_s h(x,y,z) = x.$ 
Let us prove a local estimate which sheds some light on the behavior of center Lyapunov exponent of $f_r$. 
Let $Dh(p) (e_u) = h^u(p) e_u + h^{c}(p) e_c$ for any $p \in B(0, 1)$ where $e_u$ and $e_c$ stand for unit 
vectors in $E^u$ and $E^c$. Also define $h_r^u(q) := h^u(\phi_r(q))$.

\begin{lemma}
Let $B(0, 1)$ be the unit ball of $\mathbb{R}^3$. Consider any volume preserving diffeomorphism $h$ of $B(0, 1)$ coinciding with the identity map on a neighborhood of the boundary of the ball and preserving $E^s$ coordinate. Assume that $0 < \| h - Id\|_{C^1} < 1$ then
$$
 \int_{B(0, 1)} \log   h^u(p) dm < 0.
$$
\end{lemma}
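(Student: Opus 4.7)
The plan is to use the $E^s$-preservation to reduce to a slicewise, two-dimensional Jensen inequality in the $E^u$ direction. Writing $h(x,y,z) = (x, h_2(x,y,z), h_3(x,y,z))$ in the $(e_s,e_c,e_u)$ coordinates, the hypothesis gives $h^u(p) = \partial_z h_3(p)$, and expanding $\det Dh$ along the top row yields
\begin{equation*}
\det Dh \;=\; \partial_y h_2 \, \partial_z h_3 - \partial_z h_2 \, \partial_y h_3 \;=\; 1.
\end{equation*}
Thus for each fixed $x$ the slice map $H_x(y,z) := (h_2(x,y,z), h_3(x,y,z))$ is an area-preserving $C^1$ diffeomorphism of the two-disk $D_x = \{(y,z) : y^2+z^2 < 1-x^2\}$; it equals the identity near $\partial D_x$ because points with $y^2+z^2$ close to $1-x^2$ are close to $\partial B(0,1)$. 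The hypothesis $\|h-\mathrm{Id}\|_{C^1} < 1$ forces $\partial_z h_3 > 0$ pointwise, so $\log h^u$ is well defined.

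Next I would fix $(x,y)$ with $x^2+y^2 < 1$ and examine the vertical segment $I_{x,y} := \{z : (x,y,z) \in B(0,1)\}$. Since $h_3(x,y,z) = z$ for $z$ near both endpoints of $I_{x,y}$, the fundamental theorem of calculus gives
\begin{equation*}
\int_{I_{x,y}} \partial_z h_3(x,y,z)\, dz \;=\; |I_{x,y}|,
\end{equation*}
so the mean of $\partial_z h_3$ over $I_{x,y}$ equals $1$. Concavity of $\log$ (Jensen's inequality) then yields $\int_{I_{x,y}} \log \partial_z h_3\, dz \le 0$, with equality if and only if $\partial_z h_3(x,y,\cdot) \equiv 1$ on $I_{x,y}$. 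Integrating over $(x,y)$ via Fubini produces the non-strict bound $\int_{B(0,1)} \log h^u\, dm \le 0$.

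To upgrade to strict inequality, suppose for contradiction that the integral vanishes. Then $\partial_z h_3 \equiv 1$ almost everywhere, hence everywhere by continuity; integrating in $z$ and using $h_3 = z$ near the boundary yields $h_3(x,y,z) \equiv z$ on $B(0,1)$. Feeding this back into $\det Dh = 1$ collapses the identity to $\partial_y h_2 \equiv 1$, so $h_2(x,y,z) = y + g(x,z)$ for some function $g$; applying the boundary condition along each horizontal segment in the $y$-direction forces $g \equiv 0$, and we conclude $h = \mathrm{Id}$, contradicting $\|h-\mathrm{Id}\|_{C^1} > 0$. The only genuinely delicate step is this rigidity extraction in the equality case, where the ``identity near $\partial B(0,1)$'' condition must be walked through two successive integrations to kill the residual integration constants; the rest is standard Jensen packaging once $E^s$-invariance has installed the fiberwise area-preserving structure.
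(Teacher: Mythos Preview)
Your proof is correct and follows essentially the same route as the paper: integrate along each $E^u$-segment, apply Jensen using that the mean of $h^u$ over the segment equals $1$ (from the boundary condition), Fubini over the transverse variables, and in the equality case use volume preservation plus the boundary condition to force $h=\mathrm{Id}$. The only differences are cosmetic---the paper phrases the segment computation via the projection $\pi_u\circ h\circ\gamma$ rather than the explicit $\partial_z h_3$, and your remark about the area-preserving slice map $H_x$ is correct but never actually used.
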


To prove the above lemma we take any segment $\gamma$ tangent to $E^u$ direction  and joining two  
boundary points of $B(0, 1)$.  Let $\gamma : J \subset \mathbb{R} \rightarrow B(0, 1)$ be a parametrization 
by the arc length, that is $\|\gamma'(\xi)\|=1$.  Denote by $\pi_u$ the orthogonal projection on $\gamma(J)$. Since $h$ coincides with identity close to the boundary we have that 
the curve $\pi_u (h (\gamma))$  has the same image as $\gamma$, but, possibly, different parametrization. These curves have the same length, \ie
$$
 l(\gamma)=\int_{J}  \|(\pi_u \circ h \circ \gamma)^{'}(\xi)\| d\xi = \int_J  h^u(\gamma(\xi)) d\xi.
$$
By Jensen inequality for the probability measure $\frac{d\xi}{l(\gamma)}$, 
$$
 \int_{J} \log  h^u(\gamma(\xi)) \frac{d\xi}{l(\gamma)} \leq \log \int_{J} h^u(\gamma(\xi)) \frac{d \xi}{l(\gamma)} = 0.
$$

We claim that the above inequality is strict. Indeed, in the case of equality $ h^u$ is constant equal to $1$. Because $h$ coincides with identity close to the boundary we conclude that $h$ preserves $E^u-$coordinate. As $h$ is volume preserving and preserves $E^s$ and $E^u$ coordinates, the derivative of $h$ in $E^c$-direction is also equal to $1$. Again, because $h$ is identity close to the boundary, it preserves $E^c$-coordinate too. This implies that $h$ is identity which is a contradiction.
\subsubsection{An auxiliary cocycle}

 

Now we calculate the average of the unstable Lyapunov exponent of $f_r$. Note that the unstable and center 
distributions are $C^1$-close to, but different from those for $f$. This makes it difficult to estimate
 $$
\int_{\mathbb T^3} \log J^u (f_r) (p) dm
$$
directly. The trick is to substitute $J^u_{f_r}$ by a function $\lambda_r$ which is easier to analyze, but at the 
same time
$$
\int_{\mathbb T^3} \log \lambda_r(p) dm = \int_{\mathbb T^3} \log J^u (f_r) (p) dm.
$$

We will think of $\lambda_r$ as the generator of a cocycle $F_r$ over $f_r$ acting on $E^u$,
$$
F_r\colon (p, e_u)\mapsto (f_r(p),\lambda_r(p)e_u).
$$
This way $F_r(p, e_u)=\lambda_r(p) e_u$.  By
 definition, the Lyapunov exponent of the cocycle at $p$ is 
$$
 \lim_{n \rightarrow \infty} \frac{1}{n} \log \| F_r^n (e_u) \|.
$$ 
Now let us define the cocycle:
\begin{itemize}
\item The action of $F_r$ coincides with $Df$ outside $B_r \cup f^{-1}(B_r)$ that is, $\lambda_r(p) := \lambda_u$.
 \item In $B_r$, the action of $F_r$ is the action of $Df$ on the projection of  $Dh_r(e_u)$ on $e_u$ parallel to the center bundle of $f$,  that is, $\lambda_r(p) := \lambda_u h_r^u(p)$. 
\item For any $p \in f^{-1}(B_r)$ if the negative orbit of $p$ is disjoint from $B_r$ then set $\lambda_r(p) = \lambda_u.$ Otherwise let $n(p)$ be the smallest integer for which there exist $q\in B_r$  such that $p = f^{n(p)}(q)$. Let $\tilde q = h^{-1}({q})$.
Recall that $$Dh_r (\tilde{q})(e_u) = h^u_r(\tilde{q}) e_u + h_r^c(\tilde{q}) e_c.$$ On one hand we  have
$$F_r^{n(p)}(e_u) =  h^u_r(\tilde{q}) \lambda_u^{n(p)}  e_u$$
and on the other hand, using chain rule for $f_r = f \circ h_r$, we have
$$
 Df^{n(p)}_r (e_u) = F_r^{n(p)}(e_u) + \lambda_c^{n(p)} h_r^c(\tilde{q}) e_c.
$$
Observe that the second summand above belongs to $E^c$ which may be different from the center direction of the perturbed diffeomorphism.  Let $w(p)$ be the projection of $\lambda_c^{n(p)} h_r^c(\tilde{q}) e_c$ on $e_u$ direction parallel to the new center direction at $p$ for $f_r.$  Observe that in the $n(p)$ iterates of the cocycle we ignored the vector in the $e_c$ direction which should be considered for the new diffeomorphism. So consider the correction term $$A(p) := \frac{\|F_r^{n(p)}(e_u) + w(p)\|}{\|F_r^{n(p)}(e_u)\| } = 1 + \frac{\|w(p)\|}{ h_r^c (\tilde{q}) \lambda_u^{n(p)} }$$  and define $\lambda_r(p):= A(p) \lambda_u.$ 
\end{itemize}

%

By the above definitions, for any point $p \in B_r$ and $N > 0$ such that $f^N(p) \in B_r$, we have that $F_r^{N} (e_u)$ is just the projection of $Df_r^N(e_u)$ on  $e_u$ direction along the new center stable bundle of $f_r$. As this bundle is transversal to $e_u$ and $Df_r^N(e_u)$ belongs to an unstable cone field this projection affects the norm in uniformly bounded way and the Lyapunov exponent of $f_r$ along $E^u_{f_r}$ coincides with the Lyapunov exponent of the cocycle $F_r.$

Let us state the key proposition which shows the effect of the perturbation on the  unstable Lyapunov exponent.
\begin{proposition}\label{prop_estimate}
For sufficiently small $r > 0$, $  \displaystyle{\int_{\mathbb{T}^3} \log J^u(f_r) dm} < \log(\lambda_u)$.
\end{proposition}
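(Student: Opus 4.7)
The strategy is to use the cocycle $F_r$, which was designed precisely so that $\int_{\TT^3}\log\lambda_r\, dm = \int_{\TT^3}\log J^u(f_r)\, dm$, and to partition $\TT^3$ into the three regions on which $\lambda_r$ is defined piecewise. Because $p_0$ is non-periodic, for $r>0$ sufficiently small the forward iterates $\{f^n(B_r)\}_{n\ge 0}$ are pairwise disjoint. I would write $\TT^3 = B_r \sqcup \bigsqcup_{n\ge 1}\Omega_n \sqcup \Omega_0$, where $\Omega_n=\{p\in f^n(B_r):f^{-i}(p)\notin B_r,\ 0<i<n\}$ is the set of first-return iterates and $\Omega_0$ is the untouched remainder on which $\lambda_r\equiv \lambda_u$. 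Adding the three contributions and using $m(\Omega_0)+m(B_r)+\sum m(\Omega_n)=1$ gives
\begin{equation*}
\int_{\TT^3}\log\lambda_r\, dm \;=\; \log\lambda_u \;+\; \int_{B_r}\log h^u_r\, dm \;+\; \sum_{n\ge 1}\int_{\Omega_n}\log A(p)\, dm.
\end{equation*}

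For the "gain" term, the change of variable $\phi_r$ (Jacobian $r^{-3}$) turns $\int_{B_r}\log h^u_r\, dm$ into $r^3\int_{B(0,1)}\log h^u\, dm$, which is a strictly negative multiple of $r^3$ by the preceding lemma; call it $-c_h r^3$. For the "loss" I would bound $\log A(p) \le A(p)-1 = \|w(p)\|/\bigl(|h^u_r(\tilde q)|\,\lambda_u^{n(p)}\bigr)$ and $\|w(p)\|\le \tan\theta\cdot\lambda_c^{n(p)}|h^c_r(\tilde q)|$, where $\theta$ is the (uniformly small) angle between $e_c$ and the center bundle of $f_r$. Since $\theta$ and $\|h^c_r\|_\infty$ are both controlled by $\|h-\mathrm{Id}\|_{C^1}$ and are independent of $r$, one gets $\log A(p)\le C_1(\lambda_c/\lambda_u)^{n(p)}$. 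Volume preservation of $f$ gives $m(\Omega_n)\le m(B_r)=O(r^3)$, and the geometric series in $\lambda_c/\lambda_u<1$ converges, so the loss is bounded by some $C_2 r^3$.

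\textbf{Main obstacle.} Since both gain $-c_h r^3$ and loss $\le C_2 r^3$ scale identically in $r$, the smallness of $r$ in the statement serves only to guarantee disjointness of the forward orbit of $B_r$; the real content is the quantitative comparison $c_h>C_2$. A crude uniform bound on $A(p)-1$ pays the full factor $\|h-\mathrm{Id}\|_{C^1}$ in $C_2$, which a priori need not be smaller than $c_h$; one must either (i) refine the bound on $m(\Omega_n)$ using that $f^n(B_r)$ is a long thin tube of total measure $m(B_r)$ but of which only a vanishing fraction contributes to first-return at time $n$, or (ii) exploit the geometric decay of $\theta_p$ along the forward orbit, which gives an additional small factor in each summand. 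In either case, the essential mechanism is the exponential decay $(\lambda_c/\lambda_u)^{n(p)}$ coming from the domination of $E^u$ over $E^c$, which confines the influence of the local perturbation and allows the strictly negative gain supplied by the preceding lemma to win.
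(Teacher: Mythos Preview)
There is a genuine gap, and it comes from misreading where the correction term $A(p)$ is applied. By construction of the auxiliary cocycle, $\lambda_r(p)=\lambda_u$ on all of $\TT^3\setminus(B_r\cup f^{-1}(B_r))$; the factor $A(p)$ enters \emph{only} for points $p\in f^{-1}(B_r)$ whose backward orbit has already visited $B_r$. Hence the correct decomposition is not a sum over your sets $\Omega_n$ but simply
\[
\int_{\TT^3}\log\lambda_r\,dm-\log\lambda_u
\;=\;\int_{B_r}\log h^u_r\,dm\;+\;\int_{f^{-1}(B_r)}\log A(p)\,dm.
\]
This is precisely the purpose of the cocycle: the discrepancy accumulated along an entire excursion out of $B_r$ is packed into a single correction, applied at the last step before re-entry.

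With this localization your ``main obstacle'' disappears. The loss term is supported on a set of measure $m(f^{-1}(B_r))=m(B_r)$, and for every relevant $p\in f^{-1}(B_r)$ one has $n(p)\ge n_r-1$, where $n_r$ is the first return time of $B_r$ to itself. Since $p_0$ is non-periodic, $n_r\to\infty$ as $r\to0$, so
\[
|A(p)-1|\le C\bigl(\lambda_c/\lambda_u\bigr)^{n(p)}\le C\bigl(\lambda_c/\lambda_u\bigr)^{n_r-1}\longrightarrow 0.
\]
Thus the loss is $m(B_r)\cdot o(1)$ while the gain is $m(B_r)\cdot c_h$ with $c_h>0$ independent of $r$. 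The smallness of $r$ is therefore not merely a disjointness device---indeed, by Poincar\'e recurrence the forward iterates $\{f^n(B_r)\}_{n\ge0}$ are \emph{never} pairwise disjoint for all $n$, so that opening claim of yours is false as stated---but is exactly what forces the loss to be negligible against the gain. Your proposed fixes (refining $m(\Omega_n)$, tracking the decay of the angle along the orbit) are detours around a non-problem created by the wrong decomposition.
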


By the above proposition, after perturbation the average of the unstable Lyapunov exponent decreases. 
By construction the perturbation does not change the stable Lyapunov exponent. 
At the same time the volume is preserved and the sum of Lyapunov exponents remains zero. Hence,
after perturbation the center Lyapunov exponent  increases.  

To finish the proof of Theorem~\ref{th_BB} it remains to prove Proposition~\ref{prop_estimate}.
\begin{proof}
As  $  \displaystyle{\int_{\mathbb{T}^3} \log J^u (f_r) (x) dm = \int_{\mathbb{T}^3} \log \lambda_r(x) dm }$ and $  \lambda_r(p) = \lambda_u$ for $p \notin B_r \cup f^{-1}(B_r)$ we have
\begin{align*}
\log(\lambda_u) & - \int_{\mathbb{T}^3} \log J^u (f_r) dm \\ &= \int_{B_r} \log(\lambda_u) -\log(\lambda_r(p)) dm + \int_{f^{-1}B_r} \lambda_u -\log(\lambda_r(p)) dm \\
&= -vol(B_r) \int_{B(0, 1)} \log(h^u (p)) dm  - \int_{f^{-1}B_r} \log(A(p)) dm.
\end{align*}
where the last equality comes from the definition of $\lambda_r.$ The first summand is the product of $-vol(B_r)$ and a negative constant that does not depend on $r$. We estimate the second term from above
$$\left| \int_{f^{-1}B_r} \log(A(p)) dm\right|   \leq   vol(B_r) \max_{p \in f^{-1}B_r} \log(A(p)).$$

Note that up till now we did not use ``partial hyperbolicity" of $f$. 
Now we will use domination (here it is just $\lambda_c < \lambda_u$) 
to show that the maximum above is exponentially small. Let $n_r$ be the least strictly positive integer $n$ such that $f^{n_r} B_r \cap B_r \neq \varnothing.$ Obviously $n_r\to\infty$ as $r\to 0$.

\begin{lemma}
  There is exists $0 < \alpha < 1$ and a constant $C > 0$ such that for any $r > 0$
  $$
   \max_{ p \in f^{-1}B_r} |\log(A(p))| \leq C \alpha^{n_r}.
  $$
\end{lemma}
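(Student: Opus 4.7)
The plan is to unwind the definitions and use the two essential ingredients: (i) the domination $\lambda_c<\lambda_u$, and (ii) the fact that $p$ being in $f^{-1}(B_r)$ and lying on the forward orbit of some $q\in B_r$ forces the number of iterates $n(p)$ to be comparable with (at least) $n_r-1$. For $p\in f^{-1}(B_r)$ whose backward orbit misses $B_r$ the quantity $A(p)$ equals $1$, so $\log A(p)=0$ and there is nothing to prove. We may therefore restrict to the case $p=f^{n(p)}(q)$ with $q\in B_r$.

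First, I would estimate the numerator $\|w(p)\|$. The vector $w(p)$ is the $e_u$-component, taken along the new center direction $E^c_{f_r}(p)$, of $\lambda_c^{n(p)}h_r^c(\tilde q)e_c$. Since $f_r$ is a $C^1$-small perturbation of the linear $f$ and partial hyperbolicity is $C^1$-open, the new center bundle $E^c_{f_r}$ is uniformly $C^0$-close to $e_c$ and in particular makes a uniformly bounded angle with $e_u$. Consequently decomposing $e_c$ in the basis $(e_u,E^c_{f_r}(p),E^s_{f_r}(p))$ gives an $e_u$-coefficient bounded by a constant $K_1$ independent of $r$, so
\[
\|w(p)\|\le K_1\,\lambda_c^{n(p)}|h_r^c(\tilde q)|.
\]

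Second, I would control the denominator. Because $h$ is $C^1$-close to the identity, preserves the $E^s$-coordinate and is volume-preserving, its $e_u$-component $h_r^u$ is uniformly close to $1$, and hence bounded below by some $K_2>0$ independent of $r$. Recall $F_r^{n(p)}(e_u)=h_r^u(\tilde q)\lambda_u^{n(p)}e_u$, so $\|F_r^{n(p)}(e_u)\|\ge K_2\lambda_u^{n(p)}$. Combining with the previous step,
\[
|A(p)-1|=\frac{\|w(p)\|}{\|F_r^{n(p)}(e_u)\|}\le \frac{K_1}{K_2}\Bigl(\tfrac{\lambda_c}{\lambda_u}\Bigr)^{n(p)}.
\]

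Third, I would convert this into a bound in terms of $n_r$. Since $p\in f^{-1}(B_r)$ we have $f(p)=f^{n(p)+1}(q)\in B_r\cap f^{n(p)+1}(B_r)$, so $n(p)+1\ge n_r$ by the minimality in the definition of $n_r$. Setting $\alpha:=\lambda_c/\lambda_u\in(0,1)$ (this is the only use of domination), we obtain $|A(p)-1|\le (K_1/(K_2\alpha))\,\alpha^{n_r}$. Since $n_r\to\infty$ as $r\to 0$ this quantity is small for small $r$, and the elementary inequality $|\log(1+x)|\le 2|x|$ (valid for $|x|\le 1/2$) yields
\[
|\log A(p)|\le C\alpha^{n_r},\qquad C:=\tfrac{2K_1}{K_2\alpha},
\]
uniformly in $p\in f^{-1}(B_r)$, which is the claim.

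The main subtle point is the second step: one must know that the perturbed center bundle $E^c_{f_r}$ stays uniformly transverse to $e_u$ (so that the projection defining $w(p)$ has uniformly bounded norm) and that $h_r^u$ is uniformly bounded away from $0$. Both follow from the openness of partial hyperbolicity in the $C^1$ topology together with the fact that $\|h_r-\mathrm{Id}\|_{C^1}=\|h-\mathrm{Id}\|_{C^1}$ is a fixed small number independent of $r$; this is precisely the feature of the Baraviera--Bonatti construction that makes the $C^1$ estimates scale-invariant. Everything else is a straightforward combination of the chain rule for $f_r=f\circ h_r$ and the geometric--series bound coming from domination.
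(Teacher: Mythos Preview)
Your proof is correct and follows essentially the same approach as the paper: bound $|A(p)-1|$ by a constant times $(\lambda_c/\lambda_u)^{n(p)}$ using uniform transversality of the perturbed center bundle to $e_u$ and boundedness of the partial derivatives of $h$, then use $n(p)+1\ge n_r$ and $\alpha=\lambda_c/\lambda_u$ to pass to $\alpha^{n_r}$. Your third step is in fact more careful than the paper's, which records the inequality as ``$n(p)\le n_r$'' (an evident slip, since one needs $\alpha^{n(p)}\le\alpha^{n_r}$); the correct direction $n(p)\ge n_r-1$ is precisely what you derive from $f(p)=f^{n(p)+1}(q)\in B_r$.
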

We show that $|A(p) -1| \leq C_0 \alpha^{n_r}.$ By definition of $A(p)$ we have to show
$$
  \frac{\|w(p)\|}{ h_r^u(\tilde{q}) \lambda_u^{n(p)} }  \leq C_0 \alpha^{n_r}.
$$
As the projection of $e_c$ on $e_u$ along the new center direction has a uniformly bounded norm for all points it is enough to show that 
$$
\frac{h_r^c (\tilde{q}) \lambda_c^{n(p)} }{ h^u_r (\tilde{q}) \lambda_u^{n(p)} }  \leq C_1 \alpha^{n_r}.
$$

Take $\alpha = \frac{\lambda_c}{\lambda_u}$. Then the inequality follows because $n(p) \leq n_r$ and the partial derivatives of $h$ are bounded away from below and above on $B(0, 1).$


  \end{proof}

 
\begin{remark} Considering inverse of diffeomorphism $f$ and  similar perturbation as above  it would 
be also possible to make the average of center exponent decrease after a small perturbation.
\end{remark}

The hypothesis of $f$ being linear  may seem to be very far from general partial hyperbolicity. 
However, if $f$ is partially hyperbolic with non-periodic point $p_0$ then in a neighborhood of $p_0$ the 
splitting $E^s \oplus E^c \oplus E^u$   is close to constant splitting and the same conclusions about Lyapunov 
exponents of the perturbation can be made. One argues  as follows:

 The perturbation is again $f \circ h_r$ such that $h_r$ is the conjugated by homothety to $h : B(0, 1) \rightarrow B(0, 1)$ which is defined exactly as before. Although $h$ preserves the direction parallel to $E^s(p_0)$, the stable Lyapunov exponent of $f_r$ is not necessarily equal to $f.$ This is because the stable subbundle of $f$ is not constant. However, in a very small scale the partially hyperbolic decomposition is almost constant.  It can be proved that

$$
 \lim_{r \rightarrow 0} \frac{\displaystyle{\int_{\mathbb{T}^3} \log J^u (f_r) dm  - \int_{\mathbb{T}^3} \log J^u (f) dm}}{vol(B_r)}= \int_{B(0, 1)} \log h^u(p) dm < 0
$$
and 
$$
 \lim_{r \rightarrow 0} \frac{\displaystyle{\int_{\mathbb{T}^3} \log J^s (f_r) dm  - \int_{\mathbb{T}^3} \log J^s (f) dm}}{vol(B_r)}= 0.
$$
This yields that for any small enough $r > 0$
$$
 \displaystyle{\int_{\mathbb{T}^3} \log J^u (f) dm  - \int_{\mathbb{T}^3} \log J^u (f_r) dm} > \left|\displaystyle{\int_{\mathbb{T}^3} \log J^s (f_r) dm  - \int_{\mathbb{T}^3} \log J^s (f) dm}\right |.
$$

As the volume is preserved, the center exponent should increase after perturbation. 
\begin{question}
Take a symplectic partially hyperbolic diffeomorphism $f$ with two dimensional center bundle and zero center Lyapunov exponents, i.e $\lambda^c_1 (x, f) = \lambda^c_2(x, f)= 0$ a.e $x$. Is it possible to $C^1-$perturb $f$ to a symplectomorphism $g$ such that $\int_M \lambda^c_{1}(x, g) d m(x) \neq 0?$
\end{question}


\subsection{Removing zero exponents for SRB measures} \label{section_srb} The original approach of Shub and Wilkinson in~\cite{SW} was to consider a special one parameter family of volume preserving partially hyperbolic diffeomorphism $f_\e\colon\TT^3\to\TT^3$ through the partially hyperbolic automorphism $f_0$ given by~(\ref{L0}) and then to obtain a second order expansion for the center Lyapunov exponent for the volume
$$
\lambda^c(f_\e)=K\e^2+o(\e^2),
$$
with an explicit non-zero $K$. This approach was further pursued by Ruelle~\cite{R}. Ruelle considered general one parameter families $f_\e\colon\TT^n\to\TT^n$ through a partially hyperbolic automorphism with one dimensional center $f_0\colon\TT^n\to\TT^n$. He obtained explicit second order expansions for the center Lyapunov exponent for volume preserving families as well as dissipative families $f_\e$ assuming that they have unique SRB measures $\mu_\e$.

Dolgopyat~\cite{D04} studied the case where $f_0$ is the time one  a geodesic flow on a closed surface of negative  curvature. Based on earlier work on existence and uniqueness of SRB measures for partially hyperbolic diffeomorphisms~\cite{ABV, D00} he proved the following result.
\begin{theorem}[\cite{D04}]
Let $f_\e$ be a generic one parameter $C^1$ family of partially hyperbolic diffeomorphisms passing through $f_0$. Then $f_\e$ has a unique SRB measure $\mu_e$ for small $\e$. Moreover, one has the following second order expansion for the center Lyapunov exponent of $\mu_\e$
$$
\lambda^c(f_\e,\mu_\e)=K\e^2+o(\e^2), K\neq 0.
$$
\end{theorem}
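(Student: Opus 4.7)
The plan is to combine the SRB theory of partially hyperbolic systems with a Ruelle-type perturbative analysis of the SRB measure, exploiting that $f_0$ is the time-one map of a contact Anosov flow. First I would establish that $f_\e$ admits a unique SRB measure $\mu_\e$ for small $\e$, depending continuously on $\e$ in the weak-$*$ topology. The map $f_0$ is partially hyperbolic with one-dimensional center along the flow direction, and normalized Liouville measure is its unique SRB. Because the geodesic flow is contact Anosov, joint non-integrability of $E^s_0$ and $E^u_0$ holds robustly, so accessibility of $f_\e$ persists under $C^1$-perturbation; together with center bunching, the framework of~\cite{ABV, D00} yields the asserted uniqueness and continuity of $\mu_\e$.

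The heart of the proof is a linear response statement: for smooth observables $\varphi$,
\[
\int\varphi\,d\mu_\e=\int\varphi\,d\mu_0+\e\,\rho_1(\varphi)+o(\e),
\]
for some distribution $\rho_1$. This is where Dolgopyat's quantitative analysis of transfer operators enters; its essential input is exponential decay of correlations for smooth observables under the geodesic flow (itself a theorem of Dolgopyat), transferred to $f_\e$ via anisotropic function spaces that compensate for the neutral center direction. Given linear response, and using that $E^c(f_0)$ is the flow direction so that $\log J^c(f_0)\equiv 0$, writing $\log J^c(f_\e)=\e A+\e^2 B+O(\e^3)$ yields
\[
\lambda^c(f_\e,\mu_\e)=\int\log J^c(f_\e)\,d\mu_\e=\e\int A\,d\mu_0+\e^2\Bigl(\int B\,d\mu_0+\rho_1(A)\Bigr)+o(\e^2).
\]

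To obtain the asserted second-order expansion one must show that $\int A\,d\mu_0=0$. The argument is cohomological: differentiating the cocycle identity $\log J^c(f_\e^n)=\sum_{k=0}^{n-1}(\log J^c f_\e)\circ f_\e^k$ at $\e=0$, pairing with $\mu_0$, and using that the Lyapunov exponent of the cocycle at $\e=0$ vanishes along the flow direction, the mean of $A$ must vanish. The quadratic coefficient then becomes $K=\int B\,d\mu_0+\rho_1(A)$, and by exponential mixing $\rho_1(A)$ admits a convergent representation as a correlation sum $\sum_{n\ge 0}\int A\cdot(A\circ f_0^n)\,d\mu_0$ plus a local term. This makes $K$ a nondegenerate quadratic functional in the infinitesimal perturbation $\partial_\e f_\e|_{\e=0}$; its zero locus has infinite codimension in the space of one-parameter $C^1$ families, so generically $K\neq 0$.

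The main obstacle is the linear response step. Because $f_0$ is only partially hyperbolic, its transfer operator has no classical spectral gap---there is essential spectrum on the unit circle coming from the neutral center direction---so differentiability of $\mu_\e$ in $\e$ does not follow from standard Kato perturbation theory. Dolgopyat bypasses this by working on anisotropic function spaces adapted to the dynamics, in which exponential mixing of the geodesic flow yields sufficient resolvent estimates for $f_0$ and for its $C^1$-perturbations. Once this analytic core is secured, the vanishing of the linear coefficient and the genericity of $K\neq 0$ follow as sketched above.
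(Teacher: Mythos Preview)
The paper does not prove this theorem; it is quoted from~\cite{D04} as part of the survey discussion in Section~2.3, with only a one-sentence indication that existence and uniqueness of the SRB measure rest on~\cite{ABV, D00}. There is therefore no proof in the paper to compare your proposal against.

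That said, your outline does track the architecture of Dolgopyat's argument (SRB existence/uniqueness from~\cite{ABV, D00}, differentiability of $\e\mapsto\mu_\e$ via exponential mixing and transfer-operator estimates, then an expansion of $\int\log J^c(f_\e)\,d\mu_\e$). The genuine gap is your justification that the linear coefficient $\int A\,d\mu_0$ vanishes. Differentiating the cocycle identity at $\e=0$ and integrating against $\mu_0$ gives only
\[
\int_M \partial_\e\!\left.\log J^c(f_\e^n)\right|_{\e=0}\,d\mu_0 \;=\; n\!\int_M A\,d\mu_0,
\]
which is a tautology, not a constraint; the fact that the center exponent of $f_0$ is zero is zeroth-order information and says nothing about $\int A\,d\mu_0$. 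In the actual proof the vanishing of the first-order term is a genuine computation: one must track how $E^c_\e$ itself varies with $\e$ and use the flow structure of $f_0$ (the center direction is the generator of the geodesic flow and $\mu_0$ is Liouville) to see that $A$ integrates to zero against $\mu_0$. Without that step you have only $\lambda^c(f_\e,\mu_\e)=\e\int A\,d\mu_0+O(\e^2)$, and the quadratic expansion does not follow.
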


\section{Lyapunov exponents and linearizations}
In the previous section we described perturbations that remove zero center Lyapunov exponent. Recall that the idea in both (local and global perturbations) methods was to ``borrow" exponents from the unstable bundle to the center bundle. However, as we remarked before, for all perturbations that we did, the Lyapunov exponent of unstable bundle decreased. 

So let us shift our attention for a moment to the unstable Lyapunov exponent. Consider the functional 
\begin{equation}
\label{eq_Lambda}
 \Lambda:  f \rightarrow  \int_M \lambda^u (f) (x) dm(x).
\end{equation}

  In what follows we prove a result which, in particular, implies that linear partially hyperbolic diffeomorphisms are local maximum point for $\Lambda.$ 
  
  \begin{theorem}[\cite{MT}] \label{mt} 
Let $f : \mathbb{T}^3 \rightarrow \mathbb{T}^3$ be a $C^2$ partially hyperbolic diffeomorphism and $A$ be its linearization. Then $\lambda^u(f) (x) \leq \lambda^u (A) $ and $\lambda^s(f) (x) \geq \lambda^s (A)$ for Lebesgue almost every $x \in \mathbb{T}^3.$ 
\end{theorem}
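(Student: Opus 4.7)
The plan is to exploit the semi-conjugacy between $f$ and its linearization $A$, together with the quasi-isometry of the unstable foliation in the universal cover. By Franks' theorem, since $f$ is homotopic to the partially hyperbolic automorphism $A$, there exists a continuous $H\colon \TT^3\to\TT^3$ with $H\circ f=A\circ H$ whose lift $\tilde H\colon \RR^3\to\RR^3$ stays at uniformly bounded distance from the identity, $\sup_{\tilde p}\|\tilde H(\tilde p)-\tilde p\|=C_0<\infty$.

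Using $\tilde H\circ \tilde f^n=A^n\circ \tilde H$ and the elementary bound $\|A^n v\|\le C\la_u^n\|v\|$, where $\la_u=e^{\la^u(A)}$ denotes the top eigenvalue of $A$, we obtain
$$
\|\tilde f^n(\tilde x)-\tilde f^n(\tilde y)\|\;\le\; C\la_u^n\,\|\tilde H(\tilde x)-\tilde H(\tilde y)\|+2C_0
$$
for all $\tilde x,\tilde y\in\RR^3$. Combining this with the quasi-isometry of the lifted unstable foliation $\widetilde{\cW}^u_f$ (Brin--Burago--Ivanov/Hammerlindl in dimension three), which provides $a,b>0$ with $d^u_{\tilde f}(\tilde p,\tilde q)\le a\|\tilde p-\tilde q\|+b$ whenever $\tilde p,\tilde q$ lie on a common $\widetilde{\cW}^u_f$-leaf, yields a purely deterministic upper bound
$$
d^u_f(f^n x,f^n y)\;\le\; C'\,\la_u^n+C''
$$
valid for every $x\in \TT^3$, every $y\in \cW^u_f(x)$ and every $n\ge 0$, where $C',C''$ depend on $x,y$ but not on $n$.

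The final step is to identify $\la^u(f)(x)$ with the exponential growth rate of these leafwise separations. For $C^2$ partially hyperbolic $f$ and Lebesgue-almost every $x$, standard Pesin theory produces a Lyapunov regular $y\in \cW^u_f(x)$, arbitrarily close to $x$, with
$$
\lim_{n\to\infty}\frac{1}{n}\log d^u_f(f^n x,f^n y)=\la^u(f)(x),
$$
the absolute continuity of the strong unstable holonomy guaranteeing the existence of such regular $y$ along the leaf of a regular $x$. Taking $\frac{1}{n}\log$ in the deterministic bound and passing to the limit gives $\la^u(f)(x)\le \la^u(A)$ for Lebesgue-almost every $x$. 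The stable inequality $\la^s(f)(x)\ge \la^s(A)$ follows by applying the same argument to $f^{-1}$, whose linearization is $A^{-1}$ and whose strong unstable foliation, namely $\cW^s_f$, is also quasi-isometric in $\RR^3$.

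The main technical obstacle will be the identification of the leafwise-separation rate with the Lyapunov exponent $\la^u(f)(x)$: the $C^2$ hypothesis enters precisely to ensure that $Df|_{E^u}$ is H\"older along $\cW^u_f$, so that Pesin's nonuniform expansion estimates apply, and absolute continuity of $\cW^u_f$ is required to locate a regular partner $y$ on the leaf of each regular $x$. The quasi-isometry of $\cW^u_f$ in the universal cover, although standard for partially hyperbolic diffeomorphisms on $\TT^3$, is a crucial structural ingredient that is specific to this three-dimensional toral setting and is what allows the extrinsic $\lambda_u^n$-bound to be converted into a leafwise bound.
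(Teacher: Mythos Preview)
Your overall strategy coincides with the paper's: bound the growth of $f^n$-separations along $\cW^u_f$ from above using the linear part $A$, bound it from below using Lyapunov regularity together with absolute continuity of $\cW^u_f$, and compare. The paper packages this as a contradiction argument via Pesin sets $A_n=\{z:\|D^uf^k(z)\|\ge(1+2\eps)^{2k}e^{k\lambda^u(A)}\text{ for }k\ge n\}$ intersecting a leaf segment in positive leaf measure, while you phrase it as a direct inequality; the content is the same.

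There is one genuine gap. Your upper bound rests on Franks' semi-conjugacy $H\circ f=A\circ H$, but Franks' theorem requires $A$ to be \emph{hyperbolic}, and the linearization of a partially hyperbolic $f\colon\TT^3\to\TT^3$ need not be Anosov---for instance $L_0$ in~\eqref{L0} has a unit eigenvalue. The paper avoids this by using Hammerlindl's large-scale comparison (Proposition~\ref{H2}) and the leaf-alignment estimate (Proposition~\ref{linalg}) for a \emph{single} iterate, then inducting to obtain $\|f^nx-f^ny\|\le(1+\eps)^{2n}e^{n\lambda^u(A)}\|y-x\|$ along unstable leaves. Your bound is in fact recoverable by a more elementary route: writing the lift as $\tilde f=A+\phi$ with $\phi$ $\ZZ^3$-periodic and summing the resulting telescoping series gives $\|\tilde f^n(\tilde x)-\tilde f^n(\tilde y)\|\le C\lambda_u^n(\|\tilde x-\tilde y\|+C')$ directly, with no semi-conjugacy needed; this arguably streamlines the paper's inductive argument as well.

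A minor over-claim: you assert the full limit $\lim\frac1n\log d^u_f(f^nx,f^ny)=\lambda^u(f)(x)$, but only the inequality $\liminf\ge\lambda^u(f)(x)$ is needed, and that is all the Pesin/absolute-continuity argument actually delivers (integrate $\|D^uf^n\|$ over the positive-leaf-measure set of regular points with the prescribed exponent, exactly as in the paper's lower bound). The $\limsup$ direction would require uniform control of $\|D^uf^n\|$ along the whole forward-expanding segment, which you neither need nor have.
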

  
  We recall that, using geometric growth of foliations,  Saghin and Xia~\cite{SX} proved that the unstable Lyapunov exponent does not increase after perturbation of a linear partially hyperbolic diffeomorphism. 
We prove a global version of their result.  The claim for the stable Lyapunov exponent comes out just by taking $f^{-1}$ instead of $f.$ 

  Firstly, let us recall some useful facts on the partially hyperbolic diffeomorphisms of $\mathbb{T}^3.$ Every diffeomorphism of the torus $f : \mathbb{T}^3 \rightarrow \mathbb{T}^3$ induces an automorphism of the fundamental group and there exists a unique linear diffeomorphism $A$ which induces the same automorphism on $\pi_1(\mathbb{T}^3).$ The diffeomorphism $A$ is called the linearization of $f$. One can choose a lift of $f$ and $A$ to the universal cover $\mathbb{R}^3$ which we denote again by $f$ and $A$. The lifts are also partially hyperbolic and preserve invariant foliations $\mathcal{F}^{\sigma}, \sigma \in \{s, c, u\}.$   
  An important geometric property of invariant foliations $\mathcal{F}^{\sigma}$ (in the universal cover) is their {\it quasi-isometric property}: There exists a universal $Q > 0$ such that $\|x - y\| \geq Q d^{\sigma} (x, y)$ for any  $ x, y \in \mathbb{R}^ 3, y \in \mathcal{F}^{\sigma}(x)$, where $d^{\sigma}$ stands for the inherited Riemannian metric along the leaves of $\mathcal{F}^{\sigma}.$ See \cite{H} and \cite{BBI}.

Let us state two basic propositions which are used in the proof of the above theorem.

  \begin{proposition} [\cite{H}] \label{H2} Let $f : \mathbb{T}^3 \rightarrow \mathbb{T}^3$ be a partially hyperbolic diffeomorphism and let $A: \mathbb{T}^3 \rightarrow \mathbb{T}^3$ be the linearization of $f$ then for each $k \in  \mathbb{Z}$ and $C > 1$ there is an $M > 0$ such that for $x, y \in \mathbb{R}^3$,
$$||x -  y||> M \Rightarrow
\frac{1}{C} <\frac{|| f^k(x) -  f^k(y)||}
{||A^k(x) - A^k(y)||}
< C.$$
\end{proposition}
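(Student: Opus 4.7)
The plan is to reduce the statement to the case $k=1$ and then bootstrap, using the fundamental fact that on the universal cover $f$ and $A$ lie at bounded distance. More precisely, because $f$ is homotopic to $A$, the two diffeomorphisms induce the same automorphism on $\pi_1(\mathbb{T}^3)=\mathbb{Z}^3$, so one may choose compatible lifts (still denoted $f,A$) such that the difference $f-A\colon\mathbb{R}^3\to\mathbb{R}^3$ is $\mathbb{Z}^3$-periodic. Being a continuous $\mathbb{Z}^3$-periodic map, it is uniformly bounded: set
$$
D:=\sup_{x\in\mathbb{R}^3}\|f(x)-A(x)\|<\infty.
$$

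For $k=1$, I would write $f(x)-f(y)=(A(x)-A(y))+\bigl((f-A)(x)-(f-A)(y)\bigr)$ and apply the reverse triangle inequality to obtain
$$
\bigl|\|f(x)-f(y)\|-\|A(x)-A(y)\|\bigr|\le 2D.
$$
Since $A$ is an invertible linear map, $\|A(x)-A(y)\|\ge \sigma \|x-y\|$ with $\sigma:=\|A^{-1}\|^{-1}>0$, so if $\|x-y\|>M$ then $\|A(x)-A(y)\|\ge \sigma M$. Choosing $M$ large enough that $(\sigma M-2D)/(\sigma M+2D)>1/C$ — which is possible because the ratio tends to $1$ as $M\to\infty$ and $C>1$ — yields $1/C<\|f(x)-f(y)\|/\|A(x)-A(y)\|<C$.

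To handle arbitrary $k\in\mathbb{Z}$, I would simply apply the $k=1$ argument with $f$ replaced by $f^k$ and $A$ replaced by $A^k$. The iterate $f^k$ is again a partially hyperbolic diffeomorphism of $\mathbb{T}^3$, its linearization is $A^k$, and the difference $f^k-A^k$ is $\mathbb{Z}^3$-periodic for the same reason as before (one could alternatively verify this by induction, using $\|f^k(x)-A^k(x)\|\le \|A\|\,\|f^{k-1}(x)-A^{k-1}(x)\|+D$). For $k<0$ one first observes that $f^{-1}$ is itself a partially hyperbolic diffeomorphism with linearization $A^{-1}$, and $f^{-1}-A^{-1}$ is likewise $\mathbb{Z}^3$-periodic, so the positive-$k$ case applied to $f^{-1}$ takes care of it.

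I expect no serious obstacle in this argument; the only conceptual input beyond the triangle inequality is the choice of lifts ensuring that $f-A$ is $\mathbb{Z}^3$-periodic, and the elementary fact that an invertible linear map of $\mathbb{R}^3$ expands distances by a uniform positive factor. The constant $M$ will of course depend on $k$ (through $D_k:=\sup\|f^k-A^k\|$ and $\sigma_k:=\|A^{-k}\|^{-1}$), but the statement only requires $M$ to be finite for each fixed $k$ and $C$.
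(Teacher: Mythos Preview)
Your argument is correct. In fact it is more elementary than the statement suggests: you never use partial hyperbolicity, only that $f$ is a diffeomorphism of $\TT^3$ homotopic to the invertible linear map $A$, so that compatible lifts satisfy $f(x+n)=f(x)+An$ for $n\in\ZZ^3$ and hence $f-A$ descends to a continuous map on the compact torus.

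As for the comparison you were asked to make: the paper does not supply its own proof of this proposition. It is quoted from Hammerlindl~\cite{H} and used as a black box in the proof of Theorem~\ref{mt}. So there is nothing in the paper to compare your argument against; your write-up simply fills in what the paper outsources to the reference. The argument in~\cite{H} proceeds along essentially the same lines as yours (bounded distance of lifts plus the linear lower bound $\|A^k v\|\ge \|A^{-k}\|^{-1}\|v\|$), so you have recovered the intended proof.
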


\begin{proposition} [\cite{MT}] \label{linalg}
 Let $f : \mathbb{T}^3 \rightarrow \mathbb{T}^3$ be a partially hyperbolic diffeomorphism and $A: \mathbb{T}^3 \rightarrow \mathbb{T}^3$ the linearization of $f.$ For all $n \in \mathbb{Z}$ and $\epsilon > 0$ there exists $M$ such that for $x, y \in \mathbb{R}^3$ with $y \in \mathcal{F}^{\sigma}_x$ and $||x -  y||> M$ then
 $$
   (1 - \varepsilon)e^{n\lambda^{\sigma}(A )} ||y -x|| \leq \|A^n(x) - A^n(y)\| \leq (1 + \varepsilon)e^{n\lambda^{\sigma} (A) } ||y -x||,
 $$
where $\lambda^{\sigma} (A)$ is the Lyapunov exponent of $A$ corresponding to $E^{\sigma}$ and $\sigma \in \{s, c, u\}.$
 \end{proposition}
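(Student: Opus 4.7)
The plan is to reduce the statement to a linear-algebra computation on $A$, after first establishing that for $y \in \mathcal{F}^{\sigma}_x$ with $\|y-x\|$ sufficiently large, the vector $y-x$ lies within a uniformly bounded Euclidean distance of the linear subspace $E^\sigma_A$.

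First I would invoke the semiconjugacy from \cite{H}: there exists a continuous surjection $H\colon \mathbb{R}^3\to\mathbb{R}^3$ satisfying $A\circ H = H\circ f$, $\sup_{x}\|H(x)-x\|\le K<\infty$, and $H(\mathcal{F}^{\sigma}_f(x)) \subset H(x)+E^\sigma_A$ for each $\sigma\in\{s,c,u\}$. This is the global rigidity content of Hammerlindl's leaf conjugacy, whose proof crucially uses the quasi-isometric property of the invariant foliations recalled above. Writing $w := H(y)-H(x)$ and $r := (y-x)-w$, one has $w\in E^\sigma_A$, $\|r\|\le 2K$, and consequently $\bigl|\,\|w\|-\|y-x\|\,\bigr| \le 2K$.

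Second, since $A$ acts on $E^\sigma_A$ as multiplication by $e^{\lambda^\sigma(A)}$, one has $A^n w = e^{n\lambda^\sigma(A)}w$ exactly, whence
$$A^n(y-x) \;=\; e^{n\lambda^\sigma(A)}\,w \,+\, A^n r,\qquad \|A^n r\|\le \|A^n\|_{\mathrm{op}}\cdot 2K.$$
Combining this with $\bigl|\,\|w\|-\|y-x\|\,\bigr|\le 2K$ yields
$$\bigl|\,\|A^n(y-x)\| - e^{n\lambda^\sigma(A)}\|y-x\|\,\bigr| \;\le\; 2K\bigl(e^{n\lambda^\sigma(A)}+\|A^n\|_{\mathrm{op}}\bigr) \;=:\; D_n,$$
where $D_n$ depends only on $n$ and the constant $K$. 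Dividing both sides by $e^{n\lambda^\sigma(A)}\|y-x\|$ and choosing $M := D_n/\bigl(\varepsilon\, e^{n\lambda^\sigma(A)}\bigr)$ gives the required two-sided estimate for all $\|y-x\|>M$.

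The main obstacle is the existence of the map $H$ with the leaf-preservation property; this is the nontrivial global rigidity result from \cite{H}, and once it is in hand everything else reduces to elementary linear algebra on the diagonal action of $A$. A self-contained alternative that sidesteps $H$ would be to apply Proposition \ref{H2} at negative iterates $-k$ (for $\sigma=u$, with a symmetric argument for $\sigma=s$) together with the backward contraction of $f$ along $\mathcal{F}^{u}_f$ and the quasi-isometric property, and then to decompose $y-x$ in the $A$-eigenbasis so as to show directly that the components outside $E^\sigma_A$ are negligible compared with $\|y-x\|$ as $\|y-x\|\to\infty$. This route avoids invoking the full semiconjugacy but requires a careful comparison of the exponential expansion rates of $f$ with the spectral gaps $\lambda^u(A)-\lambda^c(A)$ and $\lambda^u(A)-\lambda^s(A)$ of $A$, which is where most of the technical work would concentrate.
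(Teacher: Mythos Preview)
The paper does not prove Proposition~\ref{linalg}; it is quoted from \cite{MT} and used as a black box in the proof of Theorem~\ref{mt}. So there is no in-paper argument to compare against.

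Your approach is correct and is the natural one. The key geometric input---that for $y\in\mathcal F^\sigma_f(x)$ the displacement $y-x$ lies within a uniformly bounded Euclidean distance of the linear subspace $E^\sigma_A$---is exactly what is established in \cite{H} (together with \cite{BBI}): each leaf of $\mathcal F^\sigma_f$ in $\mathbb R^3$ is at bounded Hausdorff distance from an affine translate of $E^\sigma_A$. Once this is in hand, your linear-algebra estimate with the additive error $D_n$ depending only on $n$ is immediate and gives the proposition.

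Two minor points. First, your formulation via a semiconjugacy $H$ satisfying $H(\mathcal F^\sigma_f(x))\subset H(x)+E^\sigma_A$ for all three values of $\sigma$ implicitly assumes that $A$ is hyperbolic; if $\lambda^c(A)=0$ there is no Franks-type semiconjugacy, yet the bounded-distance property of the foliations (which is what Hammerlindl actually proves, and which is all you use) still holds. It is therefore cleaner to invoke that property directly rather than routing through $H$. Second, ``$A^n w = e^{n\lambda^\sigma(A)} w$ exactly'' should read $\|A^n w\| = e^{n\lambda^\sigma(A)}\|w\|$, since the relevant eigenvalue of $A$ on the one-dimensional space $E^\sigma_A$ may be negative; this is cosmetic and does not affect your bound. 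The ``self-contained alternative'' you sketch at the end is unnecessary once the bounded-distance property is cited.
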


\begin{proof}  
We prove the statement on $\lambda^u(f)$ of Theorem \ref{mt}. Suppose by contradiction that there is a  positive volume set $Z \subset \mathbb{R}^3,$ such that, for every $x \in Z$ we have  $\lambda^u(f)(x) > \lambda^u (A).$ We can take $Z$ such that, there exists $\epsilon > 0$ such that $\lambda^u (f) (x) > \lambda^u (A) + 2 \log (1+ 2\epsilon)$ for all $x \in Z.$
Since $f$ is  $C^2,$ the unstable foliation $\mathcal{F}^u$ for $f$ is  absolutely continuous. In particular there is a positive volume set $B \in \mathbb{R}^3$ such that for every point $p \in B$ we have

\begin{equation}
 m_{\mathcal{F}^u_p}(\mathcal{F}^u_p \cap Z) > 0,
\label{1}
\end{equation}
where $m_{\mathcal{F}^u_p}$ is the induced volume on the unstable leaf. Now consider a segment $[x,y]_u \subset \mathcal{F}^u_p $ satisfying
 $m_{\mathcal{F}^u_p}([x,y]_u \cap Z) > 0$ such that $d^u(x, y) \geq \frac{M}{Q}$, where $M$ is as required in propositions \ref{H2}, \ref{linalg} and $Q$ is the quasi isometric constant. So, we have $\|x -y\| \geq M$ and  by choosing $M$ large enough we have:
$$||Ax - Ay|| \leq (1 + \varepsilon)e^{\lambda^u (A) } ||y -x|| $$
and
$$\frac{|| fx - fy|| }{ ||Ax - Ay||} \leq 1 + \varepsilon.$$
  The above equations imply that $$ || fx - fy|| \leq (1+ \varepsilon)^2 e^{\lambda^u (A)} || y - x||.$$
Inductively, we assume that for  $n \geq 1$ we have
\begin{equation}
|| f^nx - f^ny|| \leq (1+\varepsilon)^{2n} e^{n \lambda^u (A)}|| y - x||. \label{induction}
\end{equation}
Since $f$ expands uniformly  on the $u-$direction we have $\|f^nx- f^ny\| > M$ and hence

\begin{eqnarray*}
|| f(f^nx) - f(f^ny)|| & \leq& (1+\varepsilon)|| A(f^nx) - A(f^ny)|| \\  &\leq& (1 + \varepsilon)^2 e^{\lambda^u(A)} || f^nx - f^n y||\\ &\leq&
 (1+\varepsilon)^{2(n+1)} e^{(n+1)\lambda^u (A)}.
\end{eqnarray*}

For each $n > 0,$ let $A_n \subset Z$ be the following set
$$A_n = \{ x \in Z \colon\;\; \|D^uf^k \| \geq (1+2\varepsilon)^{2k} e^{k\lambda^u (A)} \;\; \mbox{for any} \;\; k \geq n\}. $$
We have $m(Z) > 0$ and by the choice of $\epsilon$, $A_n \uparrow Z.$
Consider a large $n$ and  $\alpha_n > 0$ such that $ m_{\mathcal{F}^u_p} ([x,y]_u \cap A_n) = \alpha_n m_{\mathcal{F}^u_p} ([x,y]_u).$
 We have $\alpha_n \geq \alpha_0 > 0$ for for every large $n > 1.$ Then

\begin{eqnarray}
||f^nx - f^ny || &\geq&  Q \displaystyle\int_{[x,y]_u \cap A_n} ||D^uf^n(z)|| d m_{\mathcal{F}^u_p}(z) \geq  \\ &\geq&
 Q (1+ 2\varepsilon)^{2n}
e^{n \lambda^u(A) } m_{\mathcal{F}^u_p} ([x,y]_u \cap A_n) \\ &\geq& \alpha_0 Q (1 + 2\varepsilon)^{2n} e^{n\lambda^u (A)} \|x-y\|. \label{conclusion}
\end{eqnarray}
The inequalities $(\ref{induction})$ and $(\ref{conclusion})$ contradict each other. Therefore we obtain $\lambda^u(f)(x) \leq \lambda^u (A),$ for almost every $x \in \mathbb{T}^3.$
Considering the inverse $f^{-1} $ we conclude that
$ \lambda^s (A) \leq  \lambda^s(f)(x)$
for  almost every $x \in \mathbb{T}^3.$\end{proof}

\begin{conjecture}
Let $f$ be a $C^2$ volume preserving Anosov diffeomorphism of $\mathbb{T}^2$ and suppose that $f$ is a local maximum for the functional $\Lambda : \diff^2_m(\mathbb{T}^2) \rightarrow \mathbb{R}$ given by~\textup{(\ref{eq_Lambda})}, then $f$ is $C^1$ conjugate to a linear toral automorphism. 
\end{conjecture}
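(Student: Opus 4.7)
The plan has three steps: (a) a universal upper bound $\Lambda(f)\le\log\lambda^u(A)$, where $A$ is the linearization of $f$; (b) that this bound is attained at any local maximum; (c) that attaining the bound forces $C^1$ conjugacy to $A$. Step (a) is standard: by Franks' theorem $f$ is topologically conjugate to $A$, so $h_{\rm top}(f)=\log\lambda^u(A)$; Pesin's entropy formula combined with the variational principle gives
$$\Lambda(f)=h_m(f)\le h_{\rm top}(f)=\log\lambda^u(A),$$
with equality iff $m$ is the (unique) measure of maximal entropy $\mu_f$ of $f$. The same bound follows directly from the $\mathbb{T}^2$ analogue of Theorem~\ref{mt}, whose proof goes through verbatim since quasi-isometry is automatic on the universal cover of $\mathbb{T}^2$.

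For step (c) I would invoke a rigidity statement in the spirit of the de la Llave--Marco--Moriy\'on theorem: for a $C^2$ volume preserving Anosov diffeomorphism $f$ of $\mathbb{T}^2$, $m=\mu_f$ forces the topological conjugacy to $A$ to be $C^1$. The idea is that $m=\mu_f$, combined with ergodicity and the pointwise upper bound on $\lambda^u(f)$, gives $\lambda^u(f)(x)=\log\lambda^u(A)$ for $m$-a.e.\ $x$; an MT-type quasi-isometric comparison with $A$ performed along a periodic orbit yields the analogous bound $\lambda^u(f,p)\le\log\lambda^u(A)$ at every periodic $p$, and Livsic's theorem together with the integral equality promotes these bounds to equality on every periodic orbit. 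The regularity theory for the Livsic cohomology equation then upgrades the topological conjugacy to a $C^1$ diffeomorphism.

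The heart of the argument is step (b), which I would prove by contradiction. Suppose $\Lambda(f)<\log\lambda^u(A)$. Then $\log J^u(f)$ is not Livsic-cohomologous to the constant $\log\lambda^u(A)$, so there exists a periodic orbit $\mathcal{O}(p)$ with $\lambda^u(f,p)<\log\lambda^u(A)$. My strategy is to construct a $C^2$-small volume preserving perturbation $f_\epsilon=f\circ\phi_\epsilon$, with $\phi_\epsilon$ an area-preserving shear supported in a small tubular neighborhood of $\mathcal{O}(p)$ and aligned with $E^s(p)\oplus E^u(p)$, designed so that to first order in $\epsilon$ the factor $\|Df|_{E^u}\|$ is boosted along $\mathcal{O}(p)$.

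The principal obstacle is that the invariant unstable bundle $E^u_{f_\epsilon}$ differs from $E^u_f$ globally, so the apparent local gain in $\log\|Df|_{E^u}\|$ near $\mathcal{O}(p)$ may be offset by first-order changes far from the support of $\phi_\epsilon$. To control this one needs a Ruelle-type variational formula of the form
$$\frac{d}{dt}\bigg|_{t=0}\Lambda(f_t) \;=\; \int \mathcal{L}_f(\xi)\,dm,$$
where $\xi$ is the divergence-free infinitesimal generator of the perturbation, together with a verification that the image of $\mathcal{L}_f$ contains a function of strictly positive mean whenever $\log J^u(f)$ is not cohomologous to a constant. Equivalently, the critical points of $\Lambda$ on $\diff^2_m(\mathbb{T}^2)$ should be precisely the $f$ for which $\log J^u(f)$ is a Livsic coboundary plus a constant. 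Producing this variational formula and establishing the required surjectivity is the most delicate point of the proposed proof.
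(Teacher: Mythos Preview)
The statement you are attacking is stated in the paper as a \emph{conjecture}; the paper does not prove it. What the paper does establish is the \emph{global} version: over the whole homotopy class $\EuScript X_A$ one has $\Lambda(f)=h_m(f)=h_{\alpha_f^*m}(A)\le h_m(A)=h_{top}(A)$, with equality iff the conjugacy $\alpha_f$ is volume-preserving, hence $C^{1+\varepsilon}$ by de la Llave--Marco--Moriy\'on. Your steps (a) and (c) are precisely this argument, so they recover the paper's global statement but nothing more. (A side remark on your Livsic sketch in (c): the inference ``$\int\phi\,dm=0$ and $\int\phi\,d\nu_p\le 0$ for every periodic $\nu_p$ $\Rightarrow$ equality for every $\nu_p$'' is not valid as stated---a fully supported maximizing measure does not force all periodic measures to be maximizing. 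The clean route, which the paper takes, is: $h_m(f)=h_{top}(f)$ forces $m=\mu_f$, hence $\alpha_f^*m$ is the measure of maximal entropy for $A$, i.e.\ Lebesgue, so $\alpha_f$ is volume-preserving, and then dlL--MM applies directly.)

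Step (b) is exactly the gap between ``global maximum'' and ``local maximum,'' i.e.\ the content of the conjecture, and your proposal does not close it. You correctly isolate the obstacle: a $C^2$-small area-preserving shear near a periodic orbit perturbs $E^u$ globally, so the local first-order gain in $\log J^u$ can be cancelled elsewhere, and one needs a linear-response formula for $\Lambda$ together with a characterisation of its critical points. You posit such a formula and the statement ``$d\Lambda=0$ iff $\log J^u(f)$ is cohomologous to a constant,'' but you give no argument for either, and you explicitly flag this as the unresolved part. That is the honest state of affairs---this is the step the paper leaves open.
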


We would like to point out that a ``global version'' of this conjecture holds true. To see this fix a hyperbolic toral automorphism $A\colon\TT^2\to\TT^2$ and let $\EuScript X_A$ be the space of Anosov diffeomorphism that are homotopic to $A$. Then each $f\in \EuScript X_A$ is conjugate to $A$ by a homeomorphism $\alpha_f$. Notice that $\Lambda(f)$ is simply the metric entropy of $f$. Hence
$$
\Lambda(f)=h_m(f)=h_{\alpha_f^*m}(L)\le h_m(L)=h_{top}(L).
$$
The last inequality is the variational principle. Hence $\Lambda$ attains the global maximum on $L$. Moreover, if $h_{\alpha_f^*m}(L)= h_m(L)$ then the conjugacy $\alpha_f$ has to be volume preserving and, hence, by work of de la Llave, Marco and Moriy\'on~\cite{MM,dlL}, $\alpha_f$ is $C^{1+\varepsilon}$, $\varepsilon>0$. We conclude that $\Lambda$ attains a global maximum on $f\in\EuScript X_A$ if and only if $f$ is $C^1$ conjugate to $A$.

\section{Non-removable zero exponents}
The evidence presented earlier leads to the belief that generically partially hyperbolic diffeomorphisms have non-zero center exponents with respect to natural measures such as volume or an SRB measure.
However, if one considers all ergodic measures then it is natural to expect existence of a measure with some (or all) zero center exponents.

Consider a partially hyperbolic automorphism $L\colon\TT^3\to\TT^3$ 
\begin{equation}\label{L}
L(x_1,x_2,y)=(A(x_1,x_2), y)
\end{equation}
where $A$ is an Anosov automorphism. And let $\UL$ be a $C^1$ small neighborhood of $L$ (precise definition of $\UL$ appears later).
\begin{question}
\label{q_path_conn2}
 Given an ergodic volume preserving diffeomorphism $f\in\UL$, is it true that the space of $f$-invariant ergodic measures equipped with weak$^*$ topology is path connected?
\end{question}
\begin{remark}
 The space of ergodic measures of a transitive Anosov diffeomorphism is path connected~\cite{sigmund}.
\end{remark}
\begin{remark}
 Let $g\colon S^1\to S^1$ be a diffeomorphism with two hyperbolic fixed points, an attractor and a repeller. Then the 
space of ergodic measures of diffeomorphism ${A\times g\colon \TT^3\to\TT^3}$ has two connected components. Moreover, this property
is $C^1$-stable---the space of ergodic measures of any sufficiently small perturbation of $A\times g$ has at least
two connected components. This shows that it is essential that $f$ is conservative for the above question.
\end{remark}

The motivation for Question~\ref{q_path_conn2} comes from the following observation. Pick a diffeomorphism $f\in\UL$
such that the restriction of $f$ to an invariant center leaf has a hyperbolic attracting point $a$ and a hyperbolic repelling point $b$.
Assume that there is a path $\mu_t$, $t\in[0,1]$, of ergodic measures connecting the atom at $a$ and the atom at $b$.

The center Lyapunov exponent 
$$
\lambda^c(\mu_t)=\int_{\TT^3}\log J^c_f d\mu_t
$$
depends continuously on $t$ and, by the intermediate value theorem, there exists $t_0\in[0,1]$ such that the center exponent of $\mu_{t_0}$ is zero.

Note that the fixed points $a$ and $b$ have continuations in a small $C^1$ neighborhood $\mathcal V$ of $f$ and, hence, the 
same argument applies for diffeomorphisms in $\mathcal V$.

Thus, for example, a positive answer to Question~\ref{q_path_conn2} implies that there exists an open set of volume preserving
diffeomorphisms near $L$ each of which has an ergodic measure with zero center exponent.

\begin{remark}
 In fact, the above argument can be applied to any $f\in\UL$. Indeed, if $f$ has positive (negative) center Lyapunov 
exponents for all invariant measures then it must be uniformly hyperbolic~\cite{AAS}.
\end{remark}

Even though the answer to Question~\ref{q_path_conn2} is unknown one can still proceed with a similar idea, which is to consider a sequence (rather than a path) of measures whose center exponent tend to zero and derive results on existence of measures with zero center exponents.

\subsection{The results}

We consider an automorphism $L\colon \TT^3\to\TT^3$ given by~\eqref{L} and a small $C^1$ neighborhood
$\UL$ of $L$ in $\diff^1(\TT^3)$ chosen so that for every $f\in\UL$ Hirsch-Pugh-Shub structural stability applies and yields
a homeomorphism $H\colon\TT^3\to\TT^3$ that conjugates $L$ and $f$ on the space of center leaves. From now on 
neighborhood $\UL$ will be fixed.
\begin{theorem}[\cite{IG1, IG2, IGKN, KN}]\label{th_IGKN}
 There exists a $C^1$ open set $\mathcal V\subset\UL$ such that for every $f\in\mathcal V$ there exists 
an ergodic measure $\mu_f$ of full support with zero center Lyapunov exponent.
\end{theorem}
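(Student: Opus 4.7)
The plan is to follow the heuristic from the paragraph preceding the theorem: produce for each $f$ in an open set $\mathcal V$ a sequence of ergodic invariant measures whose center Lyapunov exponents tend to zero, and then extract a weak$^*$ limit that is itself ergodic and of full support. The key technical device is the Gorodetski--Ilyashenko--Kleptsyn--Nalskii (GIKN) ergodicity criterion, which provides sufficient combinatorial conditions guaranteeing that the weak$^*$ limit of a carefully chosen sequence of horseshoe measures remains ergodic.

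First I would define $\mathcal V$ concretely. Let $p \in \TT^2$ be a fixed point of $A$, so that $\ell_0 = \{p\} \times \SS^1$ is a center leaf pointwise fixed by $L$. Take $f_0 = L \circ \phi_0$, where $\phi_0$ is a $C^1$-small diffeomorphism supported in a tubular neighborhood of $\ell_0$ whose restriction to $\ell_0$ is a Morse--Smale diffeomorphism of $\SS^1$ with exactly one hyperbolic attracting fixed point $a$ and one hyperbolic repelling fixed point $b$. For $f_0$ one has $\lambda^c(a) < 0 < \lambda^c(b)$, and since both are hyperbolic periodic points of the ambient diffeomorphism this structure persists under $C^1$ perturbation. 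Define $\mathcal V$ to be a sufficiently small $C^1$ neighborhood of $f_0$ inside $\UL$, so that every $f \in \mathcal V$ admits continuations $a_f, b_f$ on the corresponding invariant center leaf $\ell_f = H(\ell_0)$ with $\lambda^c(a_f) < 0 < \lambda^c(b_f)$.

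Next I would construct the approximating ergodic measures. The strong stable and unstable foliations of $f \in \mathcal V$ project along the center to the Anosov stable and unstable foliations of $A$, which are minimal on $\TT^2$; from this one deduces density of $W^s(a_f), W^u(a_f), W^s(b_f), W^u(b_f)$ in $\TT^3$, and hence the existence of transverse heteroclinic intersections of both orientations between the saddles $a_f$ and $b_f$. A standard $\lambda$-lemma and shadowing argument organizes these intersections into a hyperbolic horseshoe $\Lambda_n$ coded by two symbols representing long excursions near $a_f$ and near $b_f$; selecting the word lengths $k_n, l_n$ with $k_n \lambda^c(a_f) + l_n \lambda^c(b_f) \to 0$, one obtains an ergodic measure $\mu_n$ on $\Lambda_n$ (e.g.\ the Bernoulli measure on the coding) with $\lambda^c(\mu_n) \to 0$. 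By further enriching the symbolic dynamics with heteroclinic excursions passing near a $1/n$-dense set of base points, one arranges $\Lambda_n$ to be $\varepsilon_n$-dense in $\TT^3$ with $\varepsilon_n \to 0$.

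The final step is to apply the GIKN criterion. One constructs $\Lambda_{n+1}$ to refine $\Lambda_n$, its defining words being concatenations of the $\Lambda_n$-words with short connecting blocks, so that $d_{w^*}(\mu_n, \mu_{n+1}) \le C\alpha^n$ for some $\alpha \in (0,1)$; this is exactly what the GIKN criterion requires. Then $\mu_f = \lim_n \mu_n$ is $f$-invariant and ergodic, $\lambda^c(\mu_f) = 0$ by continuity of $\mu \mapsto \int \log J^c \, d\mu$, and $\supp(\mu_f) = \TT^3$ since the supports $\Lambda_n$ become dense. The construction is uniform in $f \in \mathcal V$, yielding the open set statement. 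The main obstacle is the delicate balance between three conflicting requirements: (i) $\lambda^c(\mu_n) \to 0$ at a controlled rate; (ii) $d_{w^*}(\mu_n, \mu_{n+1})$ summable so that GIKN applies; and (iii) $\supp(\mu_n)$ spreading out to all of $\TT^3$. Conditions (ii) and (iii) pull in opposite directions, and reconciling them is the combinatorial heart of the GIKN construction: at each stage one inserts exponentially many small heteroclinic excursions, each of negligible weak$^*$ weight but collectively filling out the manifold while preserving ergodicity of the limit.
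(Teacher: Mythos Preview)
Your outline shares the paper's core strategy---construct a sequence of approximating invariant measures with center exponents tending to zero and invoke the GIKN ergodicity criterion for the limit---but the technical machinery differs in two notable ways.

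First, the paper does not work directly with heteroclinic intersections in $\TT^3$. Instead it passes through a \emph{rectification} to H\"older skew products over a symbolic base (Proposition~\ref{prop_Gor}): one takes a power of $A$ so that $\TT^2$ contains an embedded horseshoe $\bar\Lambda$, realizes a \emph{step skew product} over $\Sigma^N\simeq\bar\Lambda$ whose fiber maps include both a small rotation and a Morse--Smale circle map (Example~\ref{example}), and then argues robustness via the ``predictability property'' of H\"older skew products. The rotation is what permits orbits to visit the entire circle fiber and hence achieve full support; your single Morse--Smale fiber map does not obviously provide this. More generally, the symbolic reduction is precisely the tool designed to make the inductive estimates uniform over $\mathcal V$; your direct $\lambda$-lemma and shadowing route is plausible in spirit but you have not indicated how the quantitative bounds survive perturbation.

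Second---and this is a genuine gap---the GIKN criterion of~\cite{IGKN} is formulated for sequences of \emph{atomic measures on periodic orbits} $\nu(p_n)$, with the word of $p_{n+1}$ of the form $\alpha^k\beta$ where $\alpha$ is the word of $p_n$ and $\beta$ a short correction block, and with an explicit shadowing-proportion hypothesis. Your $\mu_n$ is a Bernoulli measure on a horseshoe $\Lambda_n$, and summability of $d_{w^*}(\mu_n,\mu_{n+1})$ alone does \emph{not} imply ergodicity of the limit (any non-ergodic measure is a weak$^*$ limit of ergodic ones). You would need either to replace each $\mu_n$ by a suitable periodic measure inside $\Lambda_n$ and verify the actual GIKN hypotheses, or to cite a generalized criterion you have not stated.
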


The measure $\mu_f$ is constructed as weak$^*$ limit of atomic measures which are supported on certain periodic orbits
of $f$. The construction rests on so called Gorodetski-Ilyashenko strategy that aims at showing that
properties of random dynamical systems (higher rank free actions) could be observed in partially 
hyperbolic diffeomorphisms and, more generally, locally maximal partially hyperbolic invariant sets.

We will explain this strategy and outline the proof of Theorem~\ref{th_IGKN} in the next subsection. So far
the work on realizing Gorodetski-Ilyashenko strategy was devoted to higher rank free actions on the
circle $S^1$. This corresponds to partially hyperbolic attractors whose center foliation is a
foliation by circles. It would be very interesting to go beyond one dimensional case.

Recently Gorodetski and \Diaz~have announced that one can make sure that set $\mathcal V$ from
Theorem~\ref{th_IGKN} contains diffeomorphism $f$ constructed in Section~\ref{section_basic_construction}. This
gives robust coexistense of (volume) non-uniform hyperbolicity and measures with zero Lyapunov exponents.

Bonatti, Gorodetski and \Diaz~have established presence of zero exponents for $C^1$ 
generic diffeomorphisms~\cite{GD, BGD}.
In particular, they have the following.
\begin{theorem}
 For a $C^1$ residual set of diffeomorphisms $f$ from $\diff^1(M)$ every homoclinic class with
a  one dimensional center direction and saddles of different indices is a support of an ergodic  
measure $\mu_f$ with zero center Lyapunov exponent.
\end{theorem}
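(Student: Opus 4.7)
The plan is to construct $\mu_f$ as a carefully designed weak-$*$ limit of ergodic periodic measures whose center exponents tend to zero, then ergodify via the Gorodetski-Ilyashenko strategy from Theorem~\ref{th_IGKN}. First, I would fix a $C^1$-residual set $\mathcal R\subset\diff^1(M)$ on which the following generic properties hold for every $f\in\mathcal R$ and every homoclinic class $H(p)$: periodic points are dense in $H(p)$ and $H(p)$ is transitive; the weak-$*$ closure of ergodic periodic measures supported in $H(p)$ equals the whole set of $f$-invariant measures on $H(p)$ (Abdenur-Bonatti-Crovisier); saddles in $H(p)$ of the same index are linked by heteroclinic intersections that are dense in $H(p)$; and Hayashi's connecting lemma is available. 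On the residual set, any homoclinic class with saddles of different indices and one-dimensional center direction therefore contains two hyperbolic periodic points $p_+,p_-$ with $\lambda^c(p_+)>0>\lambda^c(p_-)$.

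Next, using the connecting lemma together with the generic density of heteroclinic intersections, I would construct for every $\varepsilon>0$ and every $\alpha\in(0,1)$ a periodic orbit $\gamma_{\alpha,\varepsilon}\subset H(p)$ whose trajectory spends approximately an $\alpha$-fraction of its period $\varepsilon$-shadowing the orbit of $p_+$, a $(1-\alpha)$-fraction $\varepsilon$-shadowing the orbit of $p_-$, and is in addition $\varepsilon$-dense in $H(p)$. Choosing $\alpha_*=|\lambda^c(p_-)|/(\lambda^c(p_+)+|\lambda^c(p_-)|)$ makes the center Lyapunov exponent of the atomic measure on $\gamma_{\alpha_*,\varepsilon}$ close to zero, because the Birkhoff sum of $\log J^c(f)$ along $\gamma_{\alpha_*,\varepsilon}$ is dominated by the contributions from the two long blocks and is corrected only by bounded transition pieces. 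Letting $\varepsilon_n\to 0$ and adjusting $\alpha_n\to\alpha_*$ to absorb the transition error produces periodic measures $\mu_n$ supported in $H(p)$ that converge weak-$*$, charge every open subset of $H(p)$ in the limit, and satisfy $\int\log J^c(f)\,d\mu_n\to 0$.

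The final and most delicate step is extracting from the $\mu_n$ an honestly ergodic measure. For this I would implement the Gorodetski-Ilyashenko-Kleptsyn-Nalsky construction outlined in Theorem~\ref{th_IGKN}: rather than taking a naive limit, concatenate finite pieces of $\gamma_n$ of rapidly growing block lengths $N_n$ into a single pseudo-orbit, realize it as a genuine orbit via the generic shadowing available on $\mathcal R$, and choose the $N_n$ so large that for every continuous test function the Birkhoff averages along this orbit stabilize independently of the subsequence. By Sigmund's criterion this produces a single ergodic measure $\mu_f$ whose center exponent is zero and whose support coincides with $H(p)$. The main obstacle is precisely this ergodicity step: one must simultaneously balance shadowing errors, the cocycle contribution of the transition pieces joining $p_+$-blocks to $p_-$-blocks, and the $\varepsilon_n$-density of each $\gamma_n$ in $H(p)$, and the $C^1$-generic hypothesis is what supplies the flexibility to carry out all three controls at once.
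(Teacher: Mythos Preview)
The paper does not actually prove this theorem; it is quoted from \cite{GD, BGD} and stated without proof. So there is no ``paper's own proof'' to compare against, and your proposal should be judged on its own terms and against what is actually done in those references. In broad outline your strategy is the correct one and is indeed the strategy of Bonatti--D\'iaz--Gorodetski: find saddles $p_+,p_-$ in the class with center exponents of opposite sign, manufacture periodic orbits that spend prescribed fractions of time near each, and pass to an ergodic limit via the GIKN mechanism sketched for Theorem~\ref{th_IGKN}.

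There is, however, a genuine technical confusion in your final step. You write that you will ``concatenate finite pieces of $\gamma_n$ \ldots into a single pseudo-orbit, realize it as a genuine orbit via the generic shadowing available on $\mathcal R$,'' and then invoke ``Sigmund's criterion.'' Neither ingredient is available here. There is no $C^1$-generic shadowing lemma for a non-hyperbolic homoclinic class; the whole point of the one-dimensional center hypothesis with indices of both types is that the class is genuinely non-hyperbolic, and pseudo-orbits need not be shadowed. The GIKN construction does \emph{not} produce a single orbit: it produces an inductively defined sequence of genuine periodic orbits $q_n$, where the period word of $q_{n+1}$ has the form $\alpha(q_n)^{k_n}\beta_n$ with $\beta_n$ short compared to $\alpha(q_n)^{k_n}$, exactly as in the paper's sketch. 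Each $q_{n+1}$ is obtained not by shadowing but by exploiting the transverse heteroclinic connections between saddles of the class (guaranteed generically) to build an honest periodic orbit that follows $q_n$ for many periods and then makes a short excursion near $p_\pm$ to correct the center exponent. Ergodicity of the weak-$*$ limit of $\nu(q_n)$ then follows from the GIKN ``good approximation'' criterion (the similarity condition described in the paper's sketch), not from anything of Sigmund. Your outline becomes a correct proof once you replace the shadowed pseudo-orbit by this inductive periodic-orbit construction and invoke the GIKN criterion in place of Sigmund.
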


This result can be viewed as a step towards establishing a generic dichotomy ``uniform hyperbolicity
versus presence of zero Lyapunov exponents.''
Returning to our setup: there is an open and dense subset of $\UL$ for which the homoclinic class is
$\TT^3$. Hence we have the following.
\begin{corollary}
 There exists a $C^1$ residual subset $\mathcal S$ of $\UL$ such that every $f\in\mathcal S$ has an ergodic
measure $\mu_f$ of full support with zero center Lyapunov exponent.
\end{corollary}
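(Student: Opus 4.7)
The plan is to combine the Bonatti--Gorodetski--\Diaz~theorem stated above with the parenthetical fact that there is an open and dense subset of $\UL$ on which the relevant homoclinic class fills $\TT^3$. Let $\mathcal R_0\subset\diff^1(\TT^3)$ be the $C^1$-residual set supplied by the BGD theorem, and set $\mathcal R_1:=\mathcal R_0\cap\UL$, which is residual in $\UL$ since $\UL$ is open.

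Next, I would fix a hyperbolic periodic point $p$ of $L$ and track its hyperbolic continuation $p_f$ for $f\in\UL$. I would invoke the statement indicated just before the Corollary to produce an open and dense subset $\mathcal O_1\subset\UL$ on which the homoclinic class $H(p_f,f)$ equals $\TT^3$. This rests on two well-known ingredients: robust transitivity of diffeomorphisms in $\UL$ (close to the partially hyperbolic automorphism $L$ with compact one-dimensional center fibers), and the $C^1$-generic coincidence of chain-recurrent classes with homoclinic classes. I would then construct an open and dense subset $\mathcal O_2\subset\UL$ of diffeomorphisms admitting hyperbolic saddles of both $s$-indices $1$ and $2$. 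Since $\dim E^c=1$, the index of a saddle is determined by the sign of its center Lyapunov exponent. Existence of each type of saddle is clearly $C^1$-open; for density, I would start from any $f\in\UL$ and apply the local $C^1$-perturbation of Section~\ref{subs_BB} (together with its inverse version recorded in the Remark there) to shift $\int\log J^c\,dm$ in each direction; then Ma\~n\'e's ergodic closing lemma produces periodic orbits with center Lyapunov exponent of the desired sign, which are saddles of the desired index.

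Set $\mathcal S:=\mathcal R_1\cap\mathcal O_1\cap\mathcal O_2$; this is a residual subset of $\UL$. Fix $f\in\mathcal S$. The homoclinic class $H(p_f,f)=\TT^3$ is partially hyperbolic with one-dimensional center bundle, and by the previous paragraph $f$ has saddles of both indices $1$ and $2$. Because $H(p_f,f)$ is the full torus, every hyperbolic periodic orbit of $f$ lies in $H(p_f,f)$ (all of them are chain-related to $p_f$), so these saddles of different indices in fact lie in this homoclinic class. The Bonatti--Gorodetski--\Diaz~theorem then yields an ergodic measure $\mu_f$ supported on $H(p_f,f)$ with zero center Lyapunov exponent, and $\supp\mu_f=H(p_f,f)=\TT^3$.

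The main obstacle is not any single step but the coordination of the three generic conditions: one needs the BGD hypotheses (one-dimensional center, saddles of different indices in a single homoclinic class) to be satisfied simultaneously with full support of the homoclinic class. The one-dimensional center is automatic inside $\UL$, and fullness of the homoclinic class is precisely what forces every hyperbolic saddle to belong to it, so the index-diversity and the support conclusions reconcile for free. The technically delicate ingredient is the open and dense fact $\mathcal O_1$, which I am using as a black box following the text's assertion.
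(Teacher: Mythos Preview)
Your overall strategy matches the paper's: the paper's entire derivation is the single sentence ``there is an open and dense subset of $\UL$ for which the homoclinic class is $\TT^3$. Hence we have the following,'' and you have correctly unpacked this as intersecting the BGD residual set with the open-dense set $\mathcal O_1$ where the homoclinic class is all of $\TT^3$. You are in fact more careful than the paper in isolating the ``saddles of different indices'' hypothesis via your set $\mathcal O_2$.

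There is, however, a genuine gap in your density argument for $\mathcal O_2$. The Baraviera--Bonatti perturbation of Section~\ref{subs_BB} is stated and proved in the \emph{volume-preserving} category, whereas $\UL$ is a neighborhood in $\diff^1(\TT^3)$; for a general $f\in\UL$ the volume $m$ is not invariant, so $\int\log J^c\,dm$ is not a Lyapunov exponent of any invariant measure and \Mane's ergodic closing lemma cannot be fed this quantity. Fortunately the density of $\mathcal O_2$ is much easier in the dissipative setting and needs neither of these tools: by HPS stability every $f\in\UL$ has a periodic center circle over each periodic orbit of $A$, and the return map to such a circle is a diffeomorphism of $S^1$ that is $C^1$-close to the identity. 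An arbitrarily small $C^1$-perturbation of $f$ supported near one such circle makes this return map Morse--Smale, producing hyperbolic periodic points of $f$ with center multiplier on each side of $1$, i.e.\ saddles of both indices. Since having a hyperbolic saddle of a given index is $C^1$-open, this gives open-density of $\mathcal O_2$, and the remainder of your argument goes through unchanged.
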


Recently Bochi, Bonatti and Diaz~\cite{BBD13} constructed $C^2$ open sets of step skew products (see Definition~\ref{def_ssp}) on any manifold $M$, admitting fully-supported ergodic measures whose Lyapunov exponents along $M$ are all zero. These measures are also approximated by measures supported on periodic orbits. This result is analogous to the result of~\cite{IGKN} and is a major step towards an analogue of Theorem~\ref{th_IGKN} with higher dimensional center foliation.

Finally, in the opposite direction, let us mention the result from~\cite{ABC}: for a $C^1$ generic 
diffeomorphism generic measures supported on isolated homoclinic classes are ergodic and hyperbolic.

\subsection{The setup and the reduction to H\"older skew products}
We will be working in a more general setup than that of Theorem~\ref{th_IGKN}. This is the setup of
partially hyperbolic locally maximal invariant sets rather than partially hyperbolic diffeomorphisms.

Let $h\colon M\to M$ be a diffeomorphism that has locally maximal hyperbolic set $\Lambda$. We
call a skew product $F\colon\Lambda\times S^1\to\Lambda\times S^1$
\begin{equation}
\label{skew_product}
 F(w,x)=(h(w),g_w(x))
\end{equation}
a {\it H\"older skew product} if there exists $C>0$ and $\alpha>0$ such that the fiber diffeomorphisms
satisfy the following inequality
\begin{equation}
 \label{holder_skew_product}
d_{C^0}(g_u,g_v)\le C d(u,v)^\alpha
\end{equation}
for all $u, v\in\Lambda$.

The crucial step in Gorodetski-Ilyashenko strategy is the observation that every $C^1$-small perturbation $f$ 
of $f_0=h\times Id$ in $\diff(M\times S^1)$ is conjugate to a H\"older skew product on the locally
maximal partially hyperbolic set which is homeomorphic to $\Lambda\times S^1$. 

To see this recall that by Hirsch-Pugh-Shub structural stability theorem there exists a locally maximal $W_f^c$-saturated
invariant set $\Delta\subset M\times S^1$ and a homeomorphism $H\colon\Lambda\times S^1\to\Delta$ that ``straightens''
center leaves (that is, $H_*W_{f_0}^c=W_f^c$) and conjugates the induced maps on the space of center leaves (that
is, $H\circ f_0(W_{f_0}^c(\cdot))=f\circ H(W_{f_0}^c(\cdot))$ ). This uniquely defines $H$ as a map on the space of center leaves.
Along the center leaves $H$ can be arbitrary. Therefore we can request that $H$ preserves the second coordinate. In other words,
we choose $H$ so that it has the form
$$
H(w,x)=(H_1(w,x),x).
$$
Define 
$$F_f=H^{-1}\circ f|_{\Delta}\circ H.$$
 Then $F_f$ is a skew product of the form~\eqref{skew_product} which we call
the {\it rectification} of $f$.
\begin{proposition}[\cite{Gor}]
\label{prop_Gor}
 Rectification $F_f$ is a H\"older skew product. Moreover, constants $C$ and $\alpha$ in~\eqref{holder_skew_product} can be chosen 
independently of $f\in\UL$. If $f$ is $C^{1+\varepsilon}$ for some positive $\varepsilon$ then~\eqref{holder_skew_product}
can be replaced with a stronger inequality
$$
d_{C^1}(g_u,g_v)\le C d(u,v)^\alpha
$$
\end{proposition}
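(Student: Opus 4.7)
The strategy is to expand $F_f$ in coordinates to extract an explicit formula for the fiber maps $g_w$, then reduce the desired H\"older bound on $w \mapsto g_w$ to the standard transversal H\"older regularity of the center foliation of $f$.

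Writing $H(w,x) = (H_1(w,x), x)$ and decomposing $f(m,y) = (\phi(m,y), \psi(m,y))$, a direct computation gives
$$
F_f(w,x) = H^{-1}\bigl(\phi(H_1(w,x),x),\, \psi(H_1(w,x),x)\bigr).
$$
Because $H^{-1}$ preserves the second coordinate, the $S^1$-component of $F_f(w,x)$ is $g_w(x) := \psi(H_1(w,x),x)$, while the $M$-component is forced to equal $h(w)$ since $F_f$ is conjugate to $h$ on the space of center leaves via $H$. Thus $F_f$ is automatically of the skew product form~\eqref{skew_product}, and the content of the proposition is the regularity of $g_w$ in $w$.

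Since $f$ is $C^1$ on a compact manifold, $\psi$ is globally Lipschitz, so for $u, v \in \Lambda$
$$
d_{C^0}(g_u, g_v) \le \mathrm{Lip}(\psi)\cdot \sup_{x\in S^1} d_M\bigl(H_1(u,x), H_1(v,x)\bigr).
$$
The proposition thus reduces to proving that $w \mapsto H_1(w,\cdot)$ is uniformly $\alpha$-H\"older from $\Lambda$ into $C^0(S^1, M)$ for some $\alpha > 0$. This is the classical transversal H\"older continuity of the center foliation of $f$: center leaves of $f$ are invariant graphs over the trivial center leaves $\{w\}\times S^1$ of $f_0$ produced by the Hirsch--Pugh--Shub graph transform, and their graphs vary H\"older continuously in the base point, with exponent $\alpha$ controlled by the ratio of the hyperbolic contraction/expansion rates to the (nearly neutral) center rates of $f_0$. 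Since that ratio is stable under $C^1$-small perturbations of $f_0$, the constants $C$ and $\alpha$ may be chosen uniformly over $f \in \UL$.

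For the $C^{1+\varepsilon}$ upgrade, differentiating in $x$ yields
$$
g'_w(x) = \partial_m \psi(H_1(w,x),x)\,\partial_x H_1(w,x) + \partial_y \psi(H_1(w,x),x),
$$
and one needs both summands to depend H\"older continuously on $w$ uniformly in $x$. The partial derivatives of $\psi$ are H\"older because $f \in C^{1+\varepsilon}$; meanwhile $\partial_x H_1(w,x)$ parametrizes the tangent line to the center leaf through $H(w,x)$, and it varies H\"older in $w$ because for $C^{1+\varepsilon}$ partially hyperbolic systems the center bundle $E^c_f$ (and hence the $C^1$ jet of the center leaves) is H\"older continuous. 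Composing these estimates yields $d_{C^1}(g_u, g_v) \le C\, d(u,v)^\alpha$, again with constants uniform over $\UL$.

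The main technical obstacle is the transversal H\"older regularity of the center foliation in both $C^0$ and $C^1$ senses. Both estimates rest on the graph transform together with a careful accounting of the spectral gap between the hyperbolic and center rates, which dictates the admissible exponent $\alpha$; once that gap is fixed for $f_0$, robustness over $\UL$ is automatic from the $C^1$-continuity of the invariant cones and rate constants. In a complete write-up I would invoke Gorodetski's quantitative form of this H\"older regularity rather than reprove it.
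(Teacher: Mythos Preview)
Your proof is correct and follows essentially the same route as the paper's sketch: both reduce the $C^0$ estimate to Lipschitz continuity of $f$ composed with H\"older continuity of the conjugacy $H$ (equivalently, transversal H\"older regularity of $W^c_f$), and both reduce the $C^1$ upgrade to H\"older continuity of $E^c_f$ together with the $C^{1+\varepsilon}$ regularity of $Df$. The only cosmetic difference is that the paper identifies $g_w'(x)$ intrinsically with the center Jacobian $D^cf(H(w,x))$ via a suitable Riemannian metric, whereas you compute it by an explicit chain rule on $\psi(H_1(w,x),x)$; these are the same estimate in different notation.
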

Also it is clear that our choice of $H$ ensures that the circle diffeomorphisms $g_w$, $w\in\Lambda$, are $C^1$ close
to identity.
\begin{proof}[Proof (Sketch)]
Consider two nearby points $(u,x), (v,x)\in\Lambda\times S^1$. Then we have
\begin{equation}
\label{est1}
|g_u(x)-g_v(x)|=|f_2(H(u,x))-f_2(H(v,x))|\le A_1d(H(u,x),H(v,x)),
\end{equation}
where $f_2(\cdot)$ stands for the $S^1$-coordinate of $f(\cdot)$ and $A_1$ is a constant 
close to 1 that depends only on $d_{C^1}(f,f_0)$.

Let $D^cf=\|Df|_{E_f^c}\|$. Under appropriate choice of Riemannian metric $|g_w'(x)|=D^cf(H(w,x))$ and 
\begin{multline}
\label{est2}
 |g_u'(x)-g_v'(x)|=|D^cf(H(u,x))-D^cf(H(v,x))|\\ \le A_2 \dist(E_f^c(H(u,x)),E_f^c(H(v,x)))\le
A_2A_3 d(H(u,x),H(v,x))^\beta,
\end{multline}
where $A_2$ is again a constant that depends on $d_{C^1}(f,f_0)$ and the last inequality is due to
H\"older continuity of $E_f^c$.

Combining~\eqref{est1} and~\eqref{est2} we have
$$
d_{C^1}(g_u,g_v)\le A_4\sup_{x\in S^1}d(H(u,x),H(v,x))^\beta
$$
and to obtain the desired inequality~\eqref{holder_skew_product} we need to have
$$
d(H(u,x),H(v,x))\le A_5d(u,v)^\gamma
$$ 
for all $x\in S^1$. The proof of this inequality is not hard and can be carried out in the same way
as the proof of H\"older continuity of the conjugacy from structural stability (see, \eg~\cite[Theorem 19.1.2]{KH}).

Finally let us remark that all the constants above depend only on $d_{C^1}(f,f_0)$ and hence can be chosen uniformly in
$f\in \UL$.
\end{proof}

\subsection{Step skew products} We have passed from a diffeomorphism $f$ to its rectification $F_f\colon\Lambda\times S^1\to\Lambda\times S^1$.
Skew product $F_f$ is easier to handle since dynamics on $\Lambda$ admits symbolic description. Symbolic dynamics takes the 
simplest form in the case when $\Lambda$ is the hyperbolic invariant set of the horseshoe.

Therefore we now assume that $\Lambda$ is the hyperbolic set of the horseshoe. (Ultimately we are interested
in the case $\Lambda=\TT^2$ and we discuss this case later.) Then $\Lambda$ is homeomorphic to $\Sigma^N$,
the space of all bi-infinite words $\omega=\ldots\omega_{-1}\omega_0\omega_1\ldots$,
$\omega_i\in\{1,2, \ldots N\}$, and $h\colon\Lambda\to \Lambda$ becomes the left shift $\sigma\colon\Sigma^N\to\Sigma^N$.
The rectification $F_f$ takes the form 
$$F_f(\omega,x)=(\sigma\omega, g_\omega x).$$

Recall that our goal is to find certain periodic points of $F_f$ that would give us sought measure with zero center exponent in the limit.
Thus we need to come up with periodic words $\omega$, $\sigma^k\omega=\omega$, such that corresponding fiber
diffeomorphisms $g_{\sigma^{p-1}\omega}\circ g_{\sigma^{p-2}\omega}\circ\ldots\circ g_\omega$ can be shown to
posses desired properties such as existence of a fixed point. It would be nice if we can choose diffeomorphism
$g_\omega, g_{\sigma\omega},\ldots , g_{\sigma^{p-1}\omega}$ independently to produce these properties. However
the problem is that, a priori, these diffeomorphisms do depend on each other. This heuristics motivates the 
introduction of step skew products.
\begin{definition}\label{def_ssp}
 A skew product $F\colon\Sigma^N\times S^1\to\Sigma^N\times S^1$ is called step skew product if it has the 
form
$$
F(\omega,x)=(\sigma\omega, g_{\omega_0} x),
$$
where $\omega_0$ is the zeroth letter of $\omega$.
\end{definition}
\begin{remark}
 The term ``step skew product'' comes from the analogy with a step function as $\omega\mapsto g_{\omega_0}$ 
takes only finitely many values. Another common term is ``iterated function system."
\end{remark}
For a step skew product the fiber diffeomorphism $g_{\sigma^{m-1}\omega}\circ g_{\sigma^{m-2}\omega}\circ\ldots\circ g_\omega$
becomes $g_{\omega_{m-1}}\circ g_{\omega_{m-2}}\circ\ldots\circ g_{\omega_0}$ and thus we can paste together
diffeomorphisms $g_1, g_2,\ldots g_N$ any way we please to obtain orbits of $F$ with desired properties.
\begin{example}
\label{example}
 Take $N=2$ and let $g_1$ be a rotation by a very small angle and $g_2$ be a diffeomorphism with two
fixed points, an attractor and a repeller. This choice makes it easy to create periodic words of $g_1$ and
$g_2$ that give periodic points with small (positive or negative) center exponents. Indeed, take a small 
interval $I\subset S^1$ and start rotating it using $g_1$. Once the interval is near the attracting fixed point of 
$g_2$ we can slightly shrink it with $g_2$ and then continue rotating  until it comes inside the original interval
$I$. Then there must be a fixed point in $I$ for corresponding composition of $g_1$-s and $g_2$-s. This point is
a periodic point with small center exponent for the step skew product $F$. 
\end{example}
This type of arguments are very fruitful and can go a long way. In fact, for the above example, one can show 
that periodic points with arbitrarily small center exponent are dense in $\Sigma^2\times S^1$. Also
one can construct dense orbits of $F$ with prescribed center exponent $\lambda$ in some small interval
$(-\varepsilon,\varepsilon)$. These properties are $C^1$ stable in the space of step skew products and,
more importantly, in the space of H\"older skew products~\cite{IG1, IG2}.

\subsection{Sketch of the proof of Theorem~\ref{th_IGKN}.} The first step is to choose certain step skew product $F$ by 
specifying
the circle diffeomorphisms $g_1, g_2, \ldots g_N$. Then there exists a diffeomorphism
$f\colon M\times S^1\to M\times S^1$ such that $F_f=F$. Diffeomorphisms $g_1, g_2, \ldots g_N$ are chosen so that we can find a sequence of periodic points
$\{p_n;n\ge1\}$ which gives an ergodic measure $\mu$, $\supp\mu=\Sigma^N\times S^1$, with zero center
exponent in the limit (cf. Example~\ref{example}).

One has to prove that the construction of $\{p_n;n\ge1\}$ is robust under sufficiently 
$C^1$-small perturbations of $F$ in the space of H\"older skew products. So that, by Proposition~\ref{prop_Gor}, if $\mathcal V$ is 
sufficiently small
$C^1$ neighborhood of $f$ then any $g\in\mathcal V$ would have an ergodic measure with zero
center exponent supported on locally maximal partially hyperbolic set.

Next give an outline of this argument without going into technical details.

For any periodic point $p$ of $F$ let $\lambda^c(p)$ be the center exponent at $p$ and let $\nu(p)$ be the atomic
measure supported on the orbit of $p$, that is,  
$$
\nu(p)=\frac{1}{|\mathcal O(p)|}\sum_{q\in\mathcal O(p)}\delta_q.
$$

We have to construct a sequence of periodic points $\{p_n;n\ge 1\}$ such that $\lambda^c(p_n)\to0$, $n\to\infty$,
and any weak$^*$ accumulation point $\mu$ of the sequence $\{\nu(p_n); n\ge 1\}$ is an ergodic
measure of full support. Note that, since the space of measures on a compact space is compact in weak$^*$
topology, the sequence $\{\nu(p_n); n\ge 1\}$ has at least one accumulation point $\mu$. Measure $\mu$ 
is the measure we seek. Indeed, by the Birkhoff ergodic theorem
$$
\lambda^c(\mu)=\int_{\Sigma^N\times S^1}\log\frac{\partial F}{\partial x}d\mu=\lim_{n\to\infty}\int_{\Sigma^N\times S^1}\log\frac{\partial F}{\partial x}d\nu(p_n)=\lim_{n\to\infty}\lambda^c(p_n)=0.
$$

Periodic points $p_n$ are constructed inductively. Ergodicity of the limit is guaranteed by certain
similarity condition on periodic orbits $\mathcal O(p_n)$, $n\ge1$. Very roughly, this condition
says that for large $n$ and all $m>n$ the (non-invariant) measures
$$
\frac1n\sum_{i=1}^{n}\delta_{F^i(q)}, \;\;\;\;q\in\mathcal O(p_m),
$$
are weak$^*$ close to $\nu(p_n)$ for majority of points $q\in\mathcal O(p_m)$.

The $\Sigma^N$-coordinate of a periodic point $p_n$ is a periodic word with some period $\alpha$. Then
$\Sigma^N$-coordinate of the next periodic point $p_{n+1}$ is a periodic word with period $\alpha^k\beta$, where $\beta$ is 
much shorter than $\alpha^k$. Word $\beta$ is a ``correction term'' that, in particular, ensures inequality
$|\lambda^c(p_{n+1})|<c|\lambda^c(p_n)|$ for some fixed $c\in(0,1)$.

Another thing to take care of in the inductive construction is to make sure that orbits $\mathcal O(p_n)$ 
become
well-distributed in $\Lambda\times S^1$ to guarantee $\supp\mu=\Lambda\times S^1$. For words this means 
that every finite word $\gamma$ eventually appears as a subword of $\alpha=\alpha(p_n)$. Note that once we pass to $\alpha^k\beta$
word $\gamma$ would appear $k$ times. Thus $\gamma$ will maintain the same positive proportion in all subsequent words
and corresponding open subset of $\Lambda$ (the $\gamma$-cylinder) will have some positive $\mu$-measure. (Of course,
one has to take care of $S^1$-coordinate as well.)

We see that there are many things that has to be carefully tracked during the induction step. Moreover, the 
procedure must be $C^1$ robust. For step skew products such inductive procedure was carried out in~\cite{IGKN} and it is 
sufficient to use only two diffeomorphisms and work over $\Sigma^2$.
It is harder to carry out this scheme $C^1$-robustly in the space of H\"older skew products, this was done in~\cite{KN} who need at least 5 symbols to play with.

So we consider a $C^1$-small perturbation $\tilde F$ , $\tilde F(\omega,x)=(\sigma\omega,\tilde g_\omega x)$,
of the step skew product $F$.  The difficulty is that circle diffeomorphisms $\tilde g_\omega$ depend 
on the whole word $\omega$.
This difficulty can be overcome using so called ``predictability property'' of H\"older skew products: 
 for any $m\ge 1$ the composition 
$\tilde g_{\sigma^{m-1}\omega}\circ \tilde g_{\sigma^{m-2}\omega}\circ\ldots\circ\tilde g_{\omega}$ can be
determined approximately from first $m$ letters of $\omega$. More precisely, for any $m\ge 1$, given two words
$\omega$ and $\omega'$ with $\omega_i=\omega_i'$ for $i=0, 1, \ldots m-1$ we have
$$
d_{C^0}(\tilde g_{\sigma^{m-1}\omega}\circ \tilde g_{\sigma^{m-2}\omega}\circ\ldots\circ\tilde g_{\omega},
\tilde g_{\sigma^{m-1}\omega'}\circ \tilde g_{\sigma^{m-2}\omega'}\circ\ldots\circ\tilde g_{\omega'})\le K\delta^\beta,
$$
where $\delta=d_{C^1}(F,\tilde F)$; $K>0$ and $\beta>0$ are some fixed constants and $\sigma$ is the left shift. 

\subsection{The case $\Lambda=\TT^2$} In the case when $\Lambda=\TT^2$ and $h\colon\TT^2\to\TT^2$
is an Anosov automorphism $A\colon\TT^2\to\TT^2$ Gorodetski-Ilyashenko strategy cannot be applied in a straightforward way.
One still has symbolic dynamics which is now a subshift of finite type. The major problem is that step skew products
cannot be realized as partially hyperbolic diffeomorphisms contrary to the horseshoe case. 

One way
around this difficulty is to pass to a power $A^m$ of $A$ so that there exists an embedded $A^m$-invariant
horseshoe $\bar\Lambda\subset\TT^2$. Then define a skew product $F$ over $A^m$ as a step skew product over
$\bar\Lambda$ and extend smoothly to the rest of $\TT^2$. Then the strategy outlined above goes through
and the only alteration to be made is to make sure that periodic orbits $\mathcal O(p_n)$ spend some time
in the complement of $\bar\Lambda\times S^1$ to guarantee that $\supp\mu=\TT^3$. Such arguments
appear in~\cite{nalsky} where the author discusses the case when $\Lambda$ is the Smale-Williams solenoid.


\section{Non-zero Lyapunov exponents and pathological foliations}
\label{section_pathol}

Presence of non-zero Lyapunov exponents may lead to certain measure-theoretic ``pathology" of the center foliation of a partially hyperbolic diffeomorphism. This observation is due to \Mane$~$ and first appeared in~\cite{SW}. 

Let $f\colon\TT^3\to\TT^3$ be the partially hyperbolic diffeomorphism constructed in Section~\ref{section_basic_construction}. By Oseledets Theorem
there exists a full volume set $\Lambda\subset\TT^3$ of Lyapunov regular points whose center exponent $\lambda^c=\lambda^c(f)$ is positive. Provided that $f$ is sufficiently close to the linear automorphism $L$ the center distribution $E^c$ integrates to a foliation $W^c$ which is also a circle fibration.

Recall that we can decompose $\Lambda$ as $\Lambda=\cup_{k\ge 1}\Lambda_k$ so that on sets $\Lambda_k$ (called {\it Pesin sets}) we have uniform hyperbolicity. That is, $\forall x\in\Lambda_k$ and $\forall n>0$
$$
\|Df^n v\|\ge \frac1k e^{n(\lambda^c-\varepsilon)}\|v\|,\;\;\; v\in E^c,
$$
where $\varepsilon\in(0,\lambda^c)$.

Each leaf $\mathcal C\in W^c$ intersects a set $\Lambda_k$, $k\ge 1$, at a set of leaf Lebesgue measure zero since otherwise the lengths of 
$f^n(\mathcal C)$ would grow to infinity. {\it Thus (a full volume set) $\Lambda$ intersects every leaf of $W^c$ at a set of leaf measure zero.}

This argument can be generalized to the higher dimensional setup and gives the following result.

\begin{theorem}[\cite{PH}]\label{th_HirPes} Let $f$ be a $C^2$ volume preserving partially hyperbolic diffeomorphism. Assume that the center distribution $E^c$ integrates to an invariant foliation $W^c$ with compact leaves. Also assume that $f$ is $W^c$-dissipative, that is, the sum of center exponents is different from zero on a set of full volume. Then there exists a set of positive volume that meets every leaf of $W^c$ at a set of zero leaf volume.
\end{theorem}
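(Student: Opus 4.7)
The plan is to generalize the three-dimensional argument sketched just before the statement to arbitrary center dimension $d_c=\dim E^c$. First I apply the Oseledets and Birkhoff theorems to obtain a full-volume $f$-invariant set on which
$$
\chi(x):=\sum_{i=1}^{d_c}\lambda^c_i(x)=\lim_{n\to\infty}\frac{1}{n}\log J^c(f^n)(x)
$$
is well defined, where $J^c(f)=|\det Df|_{E^c}|$. By $W^c$-dissipativity $\chi\neq 0$ on a full-volume set, so, after replacing $f$ by $f^{-1}$ if necessary, I may assume that the $f$-invariant set $\Lambda^+:=\{x:\chi(x)>0\}$ has positive volume. A standard Pesin-type exhaustion then decomposes $\Lambda^+=\bigcup_{k\ge 1}\Lambda_k$, where
$$
\Lambda_k:=\Bigl\{x\in\Lambda^+:J^c(f^n)(x)\ge\tfrac{1}{k}e^{n/k}\text{ for all }n\ge 0\Bigr\},
$$
and I pick $k_0$ with $m(\Lambda_{k_0})>0$.

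The core of the argument is to show that $m_{\mathcal C}(\mathcal C\cap\Lambda_{k_0})=0$ for every leaf $\mathcal C\in W^c$, where $m_{\mathcal C}$ denotes the induced Riemannian volume on the compact leaf $\mathcal C$. Suppose for contradiction that $a:=m_{\mathcal C}(\mathcal C\cap\Lambda_{k_0})>0$. Change of variables along the leaf gives
$$
m_{f^n(\mathcal C)}\bigl(f^n(\mathcal C)\bigr)\ge\int_{\mathcal C\cap\Lambda_{k_0}}J^c(f^n)\,dm_{\mathcal C}\ge\frac{a}{k_0}\,e^{n/k_0}\to\infty
$$
as $n\to\infty$. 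This contradicts the compactness of the leaves $f^n(\mathcal C)\in W^c$ together with a suitable uniform bound on their leaf volumes. Granting this claim, $\Lambda_{k_0}$ itself has positive $m$-measure and intersects every leaf of $W^c$ in a set of zero leaf volume, which is exactly the conclusion.

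The step I expect to be the main obstacle is precisely the uniform bound on the leaf volumes of $f^n(\mathcal C)$. In the concrete three-dimensional setting of Section~\ref{section_basic_construction}, $W^c$ is a circle fibration $C^0$-close to the trivial one, so leaf lengths are uniformly bounded and the contradiction is immediate. In the full generality of the theorem I would either restrict attention to a $W^c$-saturated open subset of $M$ on which the leaf-volume function $x\mapsto m_{W^c(x)}(W^c(x))$ is locally bounded, via Epstein's stability theorem for foliations with compact leaves, and intersect this open set with $\Lambda_{k_0}$ to preserve positive volume; or, alternatively, disintegrate $m$ along $W^c$ and argue at the level of $\nu$-almost every leaf, exploiting that the induced $f$-action on the leaf space preserves the finite integral of the leaf volume, which is incompatible with leaf-volume blow-up on a positive-measure family of leaves.
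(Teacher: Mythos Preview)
Your approach is exactly the generalization the paper has in mind: the paper does not give a standalone proof of this theorem but only sketches the $1$-dimensional center case immediately before the statement and cites~\cite{PH} for the general result. Your replacement of the norm estimate $\|Df^n|_{E^c}\|$ by the Jacobian $J^c(f^n)$ is the correct adaptation to match the hypothesis on the \emph{sum} of center exponents, and the Pesin-set exhaustion plus leaf-volume blow-up argument is identical in spirit to the paper's sketch.

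You are also right that the uniform bound on leaf volumes is the genuine extra ingredient needed beyond the circle-fibration case, and the paper's sketch simply does not address it (there it is automatic). Of your two suggested fixes, the disintegration route is closer to what is actually done in~\cite{PH}; the Epstein-stability route is plausible but more delicate since the leaf-volume function for a compact foliation need not be globally bounded. One minor remark: rather than taking a single $\Lambda_{k_0}$, the paper's sketch uses the full set $\Lambda=\bigcup_k\Lambda_k$ as the positive-volume set meeting every leaf in zero leaf measure, which follows once you know each $\Lambda_k$ does; either choice yields the stated conclusion.
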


\begin{remark}\label{rm_katok}
 The first example of pathological center foliation was constructed by A. Katok in early nineties. This examples also lives on $\TT^3$, but has zero center exponents. See, \eg~\cite{PH, G} for a description of Katok's example and~\cite{M} for a different (non-dynamical) construction of a pathological foliation that was inspired by Katok's construction.
\end{remark}

\subsection{Conditional measures and absolute continuity}
There are several way to define absolute continuity of foliations. We say that
 a foliation is  absolutely continuous if for each foliation box the factor measure of the volume is absolutely continuous with respect to the induced volume on a transversal and the conditional measures of the volume on the plaques of the foliation are absolutely continuous with respect to the Lebesgue measure on the plaques. This property is weaker than absolute continuity of holonomy maps. However, it can be proved that  the above absolute continuity property implies that the holonomy maps between almost all pairs of transversals are absolutely continuous.
 Note that the definition of absolute continuity is independent of the particular choice of the volume form. Also note that the definition does not require presence of any dynamics. See~\cite{Pes2} for a detailed discussion of absolute continuity.

\subsection{Pathological foliations with compact center leaves}

If $M$ is the total space of a fiber bundle with compact fibers then one can speak about conditional measures on the fibers without considering foliation boxes. This is because in this case the parition into fibers is measurable.  Repetition of the argument given in the beginning of the current section immediately yields the following result.

\begin{theorem} \label{th_non_abs_cont} Consider a partially hyperbolic diffeomorphism $f$ whose center foliation is a circle fibration. Assume that
$f$ preserves an ergodic measure $\mu$ (\eg  volume) with negative (or positive) center Lyapunov exponent. Then the conditional measures of $\mu$ on the leaves of the center foliation are singular with respect to the Lebesgue measure on the leaves.
\end{theorem}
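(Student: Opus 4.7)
The plan is to imitate the argument sketched at the opening of Section~\ref{section_pathol}, but to run it not on the full-volume Oseledets set but on a generic center fiber with respect to the Rokhlin disintegration of $\mu$. Since $W^c$ is a fiber bundle with compact fibers, the partition of $M$ into center leaves is measurable, so $\mu$ admits a canonical system of conditional measures $\{\mu_{\mathcal C}\}_{\mathcal C\in W^c}$. I would argue by contradiction: assume that on a set of leaves of positive transverse $\mu$-measure the conditional $\mu_{\mathcal C}$ is absolutely continuous with respect to the leaf-Lebesgue measure $m_{\mathcal C}$.

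Without loss of generality I would treat the positive exponent case $\lambda^c>0$ (the negative case follows by replacing $f$ with $f^{-1}$, observing that $W^c$ is $f^{-1}$-invariant and the conditional structure is preserved). By Pesin's construction, decompose the full $\mu$-measure Oseledets set $\Lambda$ into the increasing union $\bigcup_{k\ge 1}\Lambda_k$ of Pesin sets on which
\[
\|Df^n|_{E^c}(x)\|\ge \tfrac1k\, e^{n(\lambda^c-\varepsilon)}\quad \text{for all } x\in \Lambda_k,\ n\ge 0,
\]
for some small $\varepsilon\in(0,\lambda^c)$. Fix $k$ large enough that $\mu(\Lambda_k)>0$. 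By the absolute continuity assumption and Fubini-type disintegration, for a set of leaves $\mathcal C$ of positive transverse measure the intersection $\mathcal C\cap\Lambda_k$ has positive $m_{\mathcal C}$-measure. Pick one such leaf $\mathcal C$.

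Now I push $\mathcal C\cap\Lambda_k$ forward. By the uniform Pesin estimate,
\[
m_{f^n(\mathcal C)}\bigl(f^n(\mathcal C\cap\Lambda_k)\bigr)\ \ge\ \tfrac1k\, e^{n(\lambda^c-\varepsilon)}\, m_{\mathcal C}(\mathcal C\cap\Lambda_k)\ \xrightarrow[n\to\infty]{}\ \infty.
\]
On the other hand $f^n(\mathcal C)$ is itself a fiber of the circle bundle $W^c$, so by compactness of the base and continuity of the fibration its leaf-length is bounded above by a constant independent of $n$. This is the desired contradiction, which forces $\mu_{\mathcal C}\perp m_{\mathcal C}$ for $\mu$-a.e. $\mathcal C$.

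The only step that requires care is the disintegration: one needs the partition into center leaves to be genuinely measurable so that Rokhlin disintegration produces conditionals on the fibers themselves, rather than merely on plaques inside foliation boxes. In the present setting this is automatic because $W^c$ is a fiber bundle with compact fibers. Beyond this, the argument is a clean, one-page application of the Pesin estimate combined with the uniform length bound on fibers; I expect no further obstacle.
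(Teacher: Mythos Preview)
Your proposal is correct and follows essentially the same approach as the paper. The paper phrases it marginally more directly---it shows that \emph{every} center leaf meets each Pesin set $\Lambda_k$ in a set of leaf-Lebesgue measure zero (otherwise the leaf length under iteration would be unbounded), and then observes that the full-$\mu$-measure set $\Lambda=\bigcup_k\Lambda_k$ therefore meets every leaf in a leaf-null set, forcing $\mu_{\mathcal C}\perp m_{\mathcal C}$---whereas you wrap the same estimate in a proof by contradiction starting from an absolute continuity hypothesis; the substance is identical.
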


This result was generalized in~\cite{PH} to the setting of partially hyperbolic diffeomorphisms with higher dimensional compact center leaves. If the sum of center Lyapunov exponents is non-zero then the center foliation is non-absolutely continuous. The sum of center Lyapunov exponents can be perturbed away from zero by Theorem~\ref{th_BB}.

Next theorem generalizes Theorem~\ref{th_non_abs_cont} and describes the conditional measures on the leaves of the center foliation.

\begin{theorem}[\cite{ruelle-wilkinson, PH}] \label{th_ruelle_wilkinson} Consider a dynamically 
coherent  partially hyperbolic diffeomorphism $f$ whose 
center leaves are fibers of a (continuous) fiber bundle. Assume that the all center Lyapunov exponents are negative (or positive) then
the conditional measures of $\mu$ on the leaves of the center foliation are atomic with $p$, $p\ge 1$, 
atoms of equal weight on each leaf.
\end{theorem}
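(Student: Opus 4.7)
The plan is to produce a measurable disintegration of $\mu$ along the center fibration, identify the atomic weight of the conditional at a point as an $f$-invariant function, show it is constant by ergodicity and positive by the exponent hypothesis, and finally force all atoms to have equal weight. Throughout, write $\mu^c_x$ for the conditional on $W^c(x)$.

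\textbf{Step 1 (disintegration and equivariance).} Because $W^c$ is a continuous fiber bundle the partition into center leaves is measurable, so Rokhlin's theorem supplies a disintegration $\{\mu^c_x\}_{x\in M}$, unique $\mu$-a.e. Since $f$ maps leaves bijectively onto leaves and preserves $\mu$, uniqueness yields the equivariance $f_*\mu^c_x=\mu^c_{fx}$ for $\mu$-a.e. $x$. Next define $\phi(x):=\mu^c_x(\{x\})$. Injectivity of $f$ on each leaf gives
$$\phi(fx)=\mu^c_{fx}(\{fx\})=(f_*\mu^c_x)(\{fx\})=\mu^c_x(f^{-1}(\{fx\})\cap W^c(x))=\mu^c_x(\{x\})=\phi(x),$$
so $\phi$ is $f$-invariant. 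Ergodicity of $\mu$ forces $\phi\equiv c$ for some constant $c\in[0,1]$.

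\textbf{Step 2 (equal weights once $c>0$).} Suppose $c>0$ and let $G$ be the full-measure set on which $\phi=c$. For $x\in G$ and any atom $y$ of $\mu^c_x$, one has $W^c(y)=W^c(x)$, hence $\mu^c_y=\mu^c_x$ and $\mu^c_x(\{y\})=\mu^c_y(\{y\})=\phi(y)$. Atoms carry positive mass, so integrating over $x$ shows that almost every atom lies in $G$; thus every atom of $\mu^c_x$ has weight $c$, and $\mu^c_x$ is the equidistributed measure on exactly $p=1/c$ points of $W^c(x)$, with no continuous part.

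\textbf{Step 3 (positivity of $c$, the hard step).} This is where the sign assumption on the center Lyapunov exponents enters. Argue by contradiction: assume $\phi\equiv 0$. Fix a Pesin block $K\subset M$ with $\mu(K)>1/2$ on which the center contraction is uniform, so that for $x,fx,\dots,f^{n-1}x\in K$ one has $f^n(B^c(x,r_0))\subset B^c(f^nx,C\kappa^n r_0)$ for some $r_0>0$ and $\kappa<1$ independent of $x$. By Poincaré recurrence, $\mu$-a.e. $x\in K$ admits return times $n_i\to\infty$ with $f^{n_i}x\in K$. Equivariance gives
$$\mu^c_{f^{n_i}x}\bigl(B^c(f^{n_i}x,\,C\kappa^{n_i}r_0)\bigr)\ge \mu^c_x(B^c(x,r_0)).$$
Choosing $r_0$ so that $\mu^c_x(B^c(x,r_0))\ge 1/2$ on a set of positive measure (possible since $\mu^c_x$ is a probability measure on a compact leaf), one produces points $y_i=f^{n_i}x$ at which the function $\psi_\delta(y):=\mu^c_y(B^c(y,\delta))$ stays $\ge 1/2$ along the sequence $\delta_i=C\kappa^{n_i}r_0\to 0$. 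On the other hand $\psi_\delta\downarrow\phi\equiv 0$ pointwise a.e. as $\delta\downarrow 0$, and Egorov's theorem upgrades this to uniform convergence off an arbitrarily small exceptional set; combined with the invariance $\int\psi_\delta\,d\mu=\int\psi_\delta\circ f\,d\mu$ and the Poincaré recurrence into $K$ minus the exceptional set, this contradicts the lower bound above. Hence $c>0$, completing the proof.

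\textbf{Main obstacle.} Step 3 is the delicate point: the conditional measures are equivariant only under the skew action $x\mapsto fx$ between distinct leaves, never invariant under any self-map of a fixed leaf, so one cannot apply a Pesin-style stable-manifold argument directly on a single leaf. The resolution sketched above—packaging the atomic weight as a limit of the $f$-invariant quantities $\psi_\delta$ and transporting contraction through Poincaré recurrence in a Pesin block—is the Ruelle-Wilkinson trick that makes the contradiction go through.
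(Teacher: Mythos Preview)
Your organization via the invariant function $\phi(x)=\mu^c_x(\{x\})$ is a clean alternative to the paper's route. The paper does not isolate $\phi$; instead it directly manufactures atoms and only invokes ergodicity at the very end. Concretely, the paper works on the leaf-saturated set $A=\{x:\mu^c_x(\Lambda_k)\ge 1/2\}$, takes the first return map $F$ to $A$, and on each such leaf picks a ball $B^c_x$ of radius $\delta_k/2$ that \emph{meets} $\Lambda_k$ and carries mass $\ge 1/2m$; since the ball contains a Pesin-regular point it lies inside a Pesin stable manifold, so $F^n(B^c_{F^{-n}x})$ shrinks while keeping mass $\ge 1/2m$, producing an atom. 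Your Step~2 (equal weights) is slicker than the paper's endgame, and your Step~3 replaces the first-return construction by an Egorov-plus-recurrence contradiction. Both are legitimate.

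There is, however, a genuine slip in Step~3. You characterize the Pesin block by ``for $x,fx,\dots,f^{n-1}x\in K$ one has $f^n(B^c(x,r_0))\subset B^c(f^nx,C\kappa^n r_0)$'', and then use Poincar\'e recurrence to produce return times $n_i$ with $f^{n_i}x\in K$. But recurrence says nothing about the intermediate iterates, so with your stated hypothesis the contraction estimate is unavailable at the times $n_i$. The fix is simply to quote the correct Pesin statement: for $x\in K$ the center ball $B^c(x,r_0)$ is contained in the Pesin stable manifold of $x$, hence $f^n(B^c(x,r_0))\subset B^c(f^nx,C\kappa^n r_0)$ for \emph{all} $n\ge 0$, with no condition on where the orbit wanders. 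With that in hand your contradiction goes through (note also that $r_0$ is dictated by $K$ and cannot be ``chosen''; you should replace the lower bound $1/2$ by some $c_0>0$, which is available since $\psi_{r_0}(x)>0$ for $\mu$-a.e.\ $x$ and $\mu(K)>1/2$). The invariance $\int\psi_\delta\,d\mu=\int\psi_\delta\circ f\,d\mu$ you mention plays no role once the correct Pesin estimate is used; recurrence of a.e.\ point of $S=\{x\in K:\psi_{r_0}(x)\ge c_0\}$ to the Egorov set $E$ already yields the contradiction.
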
 

It is interesting to obtain examples beyond circle fibrations to which Theorem~\ref{th_ruelle_wilkinson} applies.
\begin{question}
Start with a partially hyperbolic skew product with fibers of dimension $\ge 2$ that has zero center Lyapunov exponents. Is there an ergodic perturbation with only negative center  Lyapunov exponents? 
\end{question}

%

Theorem~\ref{th_ruelle_wilkinson} is a ``fiber bundle''-generalization of the following proposition. 

\begin{proposition} \label{prop_atomic_measure} Let $f$ be a diffeomorphism of a compact manifold that preserves an ergodic measure $\mu$. Assume that all Lyapunov exponents of $\mu$ are negative (or positive). Then $\mu$ is an atomic measure on a periodic sink (or source).
\end{proposition}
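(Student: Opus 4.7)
The plan is to use Pesin theory to locate a small ball around a $\mu$-generic recurrent point that is mapped strictly inside itself as a uniform contraction by some iterate $f^n$. Banach's fixed point theorem then produces a periodic sink inside that ball, and ergodicity of $\mu$ forces $\mu$ to be concentrated on its orbit. The case of all positive exponents follows by running the same argument for $f^{-1}$.

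First I would extract uniform contraction on a set of uniform size. Because all Lyapunov exponents of $\mu$ are strictly negative, Pesin's Lyapunov chart construction, applied in the ``fully negative'' regime where the local stable manifolds are themselves open subsets of $M$, supplies a nested exhaustion $\Lambda_1\subset\Lambda_2\subset\cdots$ of a $\mu$-full-measure invariant set together with constants $\rho_k>0$, $C_k>0$ and a rate $\theta\in(0,1)$ such that for every $x\in\Lambda_k$ the ball $B(x,\rho_k)$ lies in the local stable manifold of $x$ and
$$
 d(f^n y,f^n z)\le C_k\,\theta^n\, d(y,z)\qquad\text{for all }y,z\in B(x,\rho_k),\ n\ge 0.
$$
I fix $k$ with $\mu(\Lambda_k)>0$ and abbreviate $\rho=\rho_k$, $C=C_k$.

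Next I would build a trapping ball via Poincar\'e recurrence. Pick $N$ with $C\theta^N<1/2$; recurrence applied to $\Lambda_k$ yields some $x\in\Lambda_k$ and some $n\ge N$ with $f^n(x)\in\Lambda_k$ and $d(f^n x,x)<\rho/2$. For this $n$,
$$
 f^n\bigl(B(x,\rho)\bigr)\subset B\bigl(f^n x,\,C\theta^n\rho\bigr)\subset B(f^n x,\rho/2)\subset B(x,\rho),
$$
and $f^n$ has Lipschitz constant $C\theta^n<1$ on $B(x,\rho)$. Banach's theorem gives a unique fixed point $p\in B(x,\rho)$ of $f^n$, and $\|Df^n(p)\|\le C\theta^n<1$, so $p$ is a periodic sink of $f$ whose period divides $n$.

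Finally I would globalize using ergodicity. The basin $W^s(\cO(p))$ of points whose forward $f$-orbit converges to $\cO(p)$ is $f$-invariant and contains $B(x,\rho)$, hence has positive $\mu$-measure; ergodicity then promotes this to full $\mu$-measure. For $\mu$-a.e.\ $y$ the $\omega$-limit set $\omega(y)$ is therefore contained in the finite set $\cO(p)$, while Poincar\'e recurrence forces $y\in\omega(y)$ $\mu$-a.e., so $y\in\cO(p)$ almost surely. Thus $\mu$ is supported on the periodic orbit $\cO(p)$, and by $f$-invariance and ergodicity it must be the uniformly distributed atomic measure on this sink. The only substantive step is the very first one: turning the asymptotic statement ``all Lyapunov exponents are negative'' into uniform exponential contraction on balls of uniform radius on a positive-measure set. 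This is precisely the content of the Pesin stable manifold theorem in the fully-negative regime; once it is in hand, everything else is routine fixed-point and measure-theoretic bookkeeping.
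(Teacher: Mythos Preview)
Your proof is correct and follows essentially the same route as the paper: Pesin sets provide a uniform lower bound on the size of local stable balls, Poincar\'e recurrence produces a trapping ball that is mapped into itself by some iterate, the resulting contraction yields a periodic sink, and ergodicity forces $\mu$ onto its orbit. The only cosmetic differences are that the paper obtains the periodic point as a nested intersection rather than invoking Banach, and concludes atomicity directly from $f^{n}$-invariance of $\mu$ on the shrinking images rather than via your basin/$\omega$-limit argument.
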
 
To illustrate the basic idea we prove the proposition first and then Theorem~\ref{th_ruelle_wilkinson}.

\begin{proof}[Proof of Proposition~\ref{prop_atomic_measure}] Consider a Pesin set $\Lambda_k$ of positive measure. Since all Lyapunov exponents are negative the stable manifolds of points in $\Lambda_k$ are open balls in $M$. Let $\delta_k>0$ be the lower bound on the size of the stable manifolds of $x\in \Lambda_k$.

Choose a ball $B$ of radius $\delta_k/10$ such that $\mu(B\cap\Lambda_k)>0$. Then, by Poincare recurrence, there exists an $x\in B\cap\Lambda_k$ that returns to $B\cap\Lambda_k$ infinitely often. The ball $B(x,\delta_k)$ is in the stable manifold of $x$. Thus, for sufficiently large return time $n_m$, 
$$f^{n_m}(B(x,\delta_k))\subset B(f^{n_m}(x),\delta_k/2)\subset B(x,\delta_k).$$ 

Since $f^{n_m}$ preserves $\mu$ we conclude that the restriction of $\mu$ to $B(x,\delta_k)$ is an atom at $y=\cap_{i\ge 0} f^{in_m}(B(x,\delta_k))$. Then $y$ must be periodic and ergodicity implies that $\mu$ is an atomic measure that sits on the orbit of $y$.
\end{proof}

\begin{proof}[Proof of Theorem~\ref{th_ruelle_wilkinson}]
As earlier we consider Pesin sets $\Lambda_k$, $k\ge 1$, and we denote by $\delta_k>0$ the lower bound on the size of stable manifolds of $x\in
\Lambda_k$.  To avoid possible confusion we point out that in this proof we use term ``stable manifold" for Pesin stable manifold as opposed to the manifold tangent to $E^s$.

By $B^c(x,r)$ we denote a ball inside $W^c(x)$ centered at $x$ of radius $r$. Since the dimension of stable manifolds is equal to $\dim E^c+\dim E^s$, a simple growth argument (which utilizes dynamical coherence) shows that the stable manifolds are contained in center-stable leaves. Thus, for $x\in\Lambda_k$ and sufficiently small $r$, \eg $r=\delta_k/2$ the ball $B^c(x, r)$ is contained in the stable manifold of $x$.

By $\mu_x, x\in M$, we denote the conditional measure on the center leaf $W^c(x)$. By invariance, we have
$$
\mu_{f(x)}=f_*\mu_x
$$
for $\mu$ a.e. $x\in M$. 

Recall that the union of Pesin sets $\cup_{k\ge 1} \Lambda_k$ has full measure. Choose a sufficiently large $k$ so that $\mu(\Lambda_k)>1/2$. Then the set
$$
A=\bigcup_{x\in M:\mu_x(W^c(x)\cap\Lambda_k)\ge 1/2}W^c(x)
$$
has positive measure. Consider the set $B\subset A$ that consists of points that return to $A$ infinitely many times both in positive and negative time. Clearly $B$ is saturated by the center leaves and, by the Poincar\'e recurrence, $\mu(B)=\mu(A)$. Let $F\colon B\to B$ be the first return map of $f$. Then, obviously, $F$ is a bijection, $F(W^c(x))=W^c(F(x))$ and $F_*\mu_x=\mu_{F(x)}$.

There exists a number $m$ such that every center leaf can be covered by at most $m$ balls of radius $\delta_k/2$. Thus for every $x\in B$ a we can pick a ball $B^c_x=B^c(y(x),\delta_k/2)$ such that $y(x)\in W^c(x)$, $B^c_x\cap\Lambda_k\neq\varnothing$ and $\mu_x(B^c_x)\ge1/2m$.

Now let $B^c_{n,x}=F^n(B^c_{F^{-n}(x)})$ and note that $\mu_x(B^c_{n,x})=\mu_{F^{-n}(x)}(B^c_{F^{-n}(x)})\ge1/2m$. At the same time $\diam(B^c_{n,x})\to0$ as $n\to+\infty$ uniformly in $x$ since every ball $B^c_x$ is contained in a stable manifold of some point from $\Lambda_k$.

We conclude that for every $x\in B$ we have a sequence of shrinking sets inside $W^c(x)$ of positive $\mu_x$ measure. Clearly, a converging subsequence gives an atom of $\mu_x$. Thus for $\mu$ a.e $x\in B$ measure $\mu_x$ has an atom of weight $\ge 1/2m$.

Measure $\mu$ can be decomposed as a sum of measure $\mu^a$ which has atomic conditional measures on the center leaves and measure $\mu^{na}$ which has atomless conditional measures on the center leaves. Both measures are invariant and we have shown that $\mu^a(M)>0$. Thus ergodicity of $\mu$ implies that $\mu^{na}$ is a trivial measure, \ie $\mu^{na}(M)=0$. Furthermore, in similar fashion it follows from ergodicity that for $\mu$ a.e. $x$ measure $\mu_x$ has $p$ atoms of equal weight, where $p$ does not depend on $x$.
\end{proof}

\begin{remark}
 It is easy to construct examples with higher dimensional center leaves and non-zero average center exponent for which the conditional measures are singular but non-atomic.
\end{remark}
\begin{remark}
   Note that the proof of Theorem~\ref{th_ruelle_wilkinson} does not provide information on the 
number of atoms $p$. It was shown in~\cite{Homburg} that $p$ can be any positive integer. More precisely, 
it was proved that if an ergodic volume preserving perturbation $L$ of partially hyperbolic automorphism 
$L_0\colon\TT^3\to\TT^3$ ($L_0$ is given by~\eqref{L0}) 
has 
\begin{enumerate}
\item negative center exponents
\item a fixed center fiber with a unique hyperbolic attracting fixed point and a unique hyperbolic repelling fixed point
\item minimal strong unstable and strong stable foliations
\end{enumerate}
    then $p=1$. The assumptions on $L$ are known to hold for certain perturbations of the form~\eqref{Lphi}.

Now, by passing to finite self covers of $\TT^3$ one can get partially hyperbolic diffeomorphisms with 
prescribed number of atoms $p$.
\end{remark}

\subsection{Pathological foliations with non-compact leaves: near geodesic flow}

For a general partially hyperbolic diffeomorphism the geometric structure of the support of disintegration measures is not clear.

Avila, Viana and Wilkinson~\cite{AVW} showed that, for volume preserving perturbations of a time-one map $f_0\colon N\to N$ of the geodesic flow  a closed negatively curved surfaces,  the center foliation $W^c$ has either atomic or absolutely continuous conditional measures. Recall that, as discussed in Section~\ref{section_srb}, center Lyapunov exponent of $f_0$ can be perturbed away from zero. 
The center Lyapunov exponent plays a key role in the proof of the ``atomic-Lebesgue dichotomy'' of~\cite{AVW}.

 The fact that non-zero center exponent leads to atomic conditional measure on the center leaves follows easily from (the more general form of) Theorem~\ref{th_ruelle_wilkinson}. Indeed, for a perturbation $f\colon N\to N$ of $f_0$ one can consider the extension
$$
\xymatrix{
S^1\ar[d] & S^1\ar[d]\\
 E\ar[r]^{\hat f}\ar[d] &E\ar[d]\\
 N\ar[r]^f& N
}
$$
where fibers $S^1_x$, $x\in N$, are the segments of center leaves with end-points identified, \ie $S^1_x=[x,f(x)]^c/x\sim f(x)$, and $\hat f$ is induced by $f$ the obvious way. This way non-compact center foliation can be compactified and~\cite{ruelle-wilkinson} applies to give atomic conditionals on the fibers.

In the zero exponent case, the authors use in a subtle way the invariance principle proved in~\cite{ASV} (see also~\cite{AV}). The invariance principle is a non-trivial extension of seminal work of Ledrappier~\cite{led} to the context of smooth cocycles. Assuming that the center exponent is zero, the invariance principle yields a dichotomy into two cases; these two cases are characterized in terms of  geometry of the supports of conditional measures along center leaves. In the first case, the support of conditional measures is a countable subset and there exist a full volume subset such that intersects almost all center leaves in exactly $k$ orbits.
In the second case, the supports of  conditional measures coincide with the center leaves. In this case, using invariance of disintegration with respect to holonomies and regularity of holonomies (stable and unstable holonomies are $C^1$ between center leaves) the authors are able to prove that the conditional measures are absolutely continuous. Moreover, they prove that in this case, the diffeomorphism is time one map of a smooth flow and consequently, center foliation is smooth.

\begin{question} Is there an example $f$ close to time one map of geodesic flow of a negatively curved surface, such that center exponents of $f$ vanish and the center foliation is singular?
\end{question}
%

\subsection{Pathological foliations with non-compact leaves: Anosov and derived from Anosov case.}
It turns out that the argument given in the beginning of current section can be generalized to certain
non-compact center foliations, which we proceed to describe.

Let $L$ be an automorphism of $\TT^3$ with three distinct Lyapunov exponents $\lambda^s(L)<\lambda^c(L)
<\lambda^u(L)$. Assume that $\lambda^c(L)>0$. Then the center foliation $W_L^c$ is an expanding foliation.
We have shown in Section~\ref{subs_BB} that there exists a perturbation $g$ such that $\lambda^c(g)>\lambda^c(L)$.
Define {\it geometric growth} of $W_g^c$ as
$$
\chi^c(g)=\lim_{n\to\infty}\frac1n\log \mbox{length} (g^n(B_g^c(x,r))),
$$
where $B_g^c(x,r)$ is a ball inside $W_g^c(x)$. Automorphism $L$ is Anosov and thus, by the structural 
stability, $L$ and $g$ are conjugate. One can easily deduce that $\chi^c(g)=\chi^c(L)=\lambda^c(L)$.
This property of $g$ plays the same role as compactness of center leaves earlier. We can essentially
repeat the argument given in the beginning of current section and conclude that the full volume set of
Lyapunov regular points of $g$ intersects every leaf of $W_g^c$ by a set of zero length.

These arguments can be generalized to higher dimensional automorphisms. Namely, assume that automorphism $L$
preserves a partially hyperbolic splitting $T\TT^d=E^s_L\oplus E^c_L\oplus E^u_L$ and that the center
foliation $W^c_L$ is uniformly expanding.
\begin{theorem}[\cite{SX}]
 If $g\in \diff_m(\TT^d)$ is a diffeomorphism $C^1$-close to $L$ and
$$
\lambda^c(g)=\int_{\TT^d}\log J^c(g)dm>\int_{\TT^d}\log J^c(L)dm=\lambda^c(L),
$$
then foliation $W_g^c$ is non-absolutely continuous.
\end{theorem}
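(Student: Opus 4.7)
The plan is to extend the one-dimensional length-growth argument sketched just above the theorem to the general setting. The main object is the leafwise volumetric growth
\[
\chi^c(g)=\lim_{n\to\infty}\frac{1}{n}\log\mathrm{vol}^c(g^n(B^c_g(x,r))),
\]
where $\mathrm{vol}^c$ denotes Riemannian volume on center leaves of $g$. For the linear $L$ the Jacobian $J^c(L)$ is constant, hence $\chi^c(L)=\log|\det DL|_{E^c_L}|=\int\log J^c(L)\,dm=\lambda^c(L)$. Once one establishes $\chi^c(g)=\lambda^c(L)$, the strict inequality $\lambda^c(g)>\lambda^c(L)$ together with Pesin theory will exhibit a full-Lebesgue-measure subset of $\TT^d$ whose intersection with every center leaf has zero leaf volume, contradicting absolute continuity via Fubini along $W^c_g$.

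The identification $\chi^c(g)=\lambda^c(L)$ comes from structural stability. Because $W^c_L$ is uniformly expanding, $E^c_L\oplus E^u_L$ is the Anosov unstable bundle of $L$, so $L$ is Anosov; for $g$ sufficiently $C^1$-close there exists a H\"older homeomorphism $H\colon\TT^d\to\TT^d$ with $g\circ H=H\circ L$, and by Hirsch--Pugh--Shub stability of the dominated splitting $H$ carries $W^c_L$ onto $W^c_g$ leafwise. Lifting to $\RR^d$, the lift $\tilde H$ is at bounded $C^0$ distance from the identity, and, because $g$ is $C^1$-close to $L$, each leaf of $W^c_g$ is a uniform $C^0$ graph over an affine subspace parallel to $E^c_L$. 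Combining these two facts lets one sandwich $\tilde g^n(B^c_g(x,r))$ between bounded Euclidean neighborhoods of $\tilde L^n(\tilde H(B^c_g(x,r)))$ inside the appropriate $E^c_L$-affine subspace, so their $d_c$-dimensional leaf volumes have the same exponential growth rate. Since $\tilde L^n$ expands $E^c_L$-volume exactly by the factor $e^{n\lambda^c(L)}$, the claim follows.

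Granted $\chi^c(g)=\lambda^c(L)$, fix $\varepsilon\in(0,\lambda^c(g)-\lambda^c(L))$ and apply Pesin's construction to the Oseledets decomposition of $Dg|_{E^c_g}$: there is a nested exhaustion $\Lambda_1\subset\Lambda_2\subset\cdots$ with $m\bigl(\bigcup_k\Lambda_k\bigr)=1$ and
\[
J^c(g^n)(x)\ge\frac{1}{k}\,e^{n(\lambda^c(g)-\varepsilon)}\qquad\text{for all } x\in\Lambda_k \text{ and all } n\ge 0.
\]
Now assume for contradiction that $W^c_g$ is absolutely continuous. Then the full-measure set $\bigcup_k\Lambda_k$ meets almost every $W^c_g$-leaf in a set of full leaf volume, so for some $k$ one can select a center ball $B^c_g(x,r)$ with $\mathrm{vol}^c(B^c_g(x,r)\cap\Lambda_k)>0$. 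Consequently
\[
\mathrm{vol}^c(g^n(B^c_g(x,r)))\ge\int_{B^c_g(x,r)\cap\Lambda_k}J^c(g^n)\,dv^c\ge\tfrac{1}{k}\,e^{n(\lambda^c(g)-\varepsilon)}\,\mathrm{vol}^c(B^c_g(x,r)\cap\Lambda_k),
\]
which grows at exponential rate $\lambda^c(g)-\varepsilon>\lambda^c(L)=\chi^c(g)$, contradicting the definition of $\chi^c(g)$. Hence $W^c_g$ is not absolutely continuous.

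The main obstacle will be the equality $\chi^c(g)=\lambda^c(L)$: the conjugacy $H$ is only H\"older continuous, so direct volume transport through $H$ is unavailable. One must exploit that $\tilde H-\mathrm{id}$ is bounded and that $W^c_g$-leaves are uniform $C^0$ graphs over $E^c_L$-affine subspaces, converting the comparison into a Euclidean estimate where the linear action of $L$ dictates the exact volume-growth rate. This quasi-isometric comparison between the center leaves of $g$ and of $L$, in the spirit of the tools used in~\cite{H, BBI, SX}, is the technical heart of the proof; the Pesin-theoretic lower bound on $J^c(g^n)$ and the Fubini argument against absolute continuity are then formal.
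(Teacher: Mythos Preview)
Your approach is precisely the one the paper indicates: it does not prove this theorem but sketches the $d=3$ case just before the statement---define the geometric (volumetric) growth $\chi^c(g)$, use structural stability to identify $\chi^c(g)=\lambda^c(L)$, and then run the Pesin-set growth argument from the beginning of Section~\ref{section_pathol}---and then says ``These arguments can be generalized to higher dimensional automorphisms.'' Your proposal is exactly that generalization, and you have correctly isolated the only nontrivial step, namely the quasi-isometric comparison of $W^c_g$-leaf volumes with $E^c_L$-affine volumes via the bounded-distance lift $\tilde H$.

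One small correction: you claim $m\bigl(\bigcup_k\Lambda_k\bigr)=1$ with $\Lambda_k=\{x:J^c(g^n)(x)\ge k^{-1}e^{n(\lambda^c(g)-\varepsilon)}\ \forall n\ge0\}$, but this presupposes ergodicity of $m$, which is not assumed. Without ergodicity the Birkhoff limit $\phi(x)=\lim_n\frac1n\log J^c(g^n)(x)$ is only $g$-invariant with $\int\phi\,dm=\lambda^c(g)$, so $\{\phi>\lambda^c(L)\}$ has \emph{positive} (not full) measure. That is all you need: absolute continuity of $W^c_g$ would force this positive-measure set to meet some center leaf in positive leaf volume, and then your displayed lower bound (with $\lambda^c(g)-\varepsilon$ replaced by some level strictly above $\lambda^c(L)$) contradicts $\chi^c(g)=\lambda^c(L)$ exactly as written.
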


Note that one can also consider diffeomorphisms $g$ with $\lambda^c(g)<\lambda^c(L)$ and it is not clear if
this inequality forces non-absolute continuity of $W_g^c$. However it is the case when $d=3$. In the 
three dimensional case there is an if-and-only-if description for non-absolute continuity of $W_g^c$ 
given in terms of eigenvalues at periodic points of $g$~\cite{G12}. The following theorem is a corollary of this description.
\begin{theorem}[\cite{G12}]
Let $L$ be an automorphism of $\TT^3$ with three distinct Lyapunov exponents $\lambda^s(L)<\lambda^c(L)
<\lambda^u(L)$. Assume that $\lambda^c(L)>0$. Let $\UL$ be a $C^1$-small neighborhood of $L$ in $\diff_m(\TT^3)$.
Then there is a $C^1$-open and $C^r$-dense set $\mathcal V\subset \UL$ such that $g\in \mathcal V$
if and only if the center foliation $W_g^c$ is non-absolutely continuous with respect to the volume $m$.
\end{theorem}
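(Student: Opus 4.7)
The plan is to deduce the theorem as a direct corollary of the periodic-data characterization of non-absolute continuity obtained in~\cite{G12}, namely: for $g \in \UL$, the center foliation $W_g^c$ is absolutely continuous with respect to $m$ if and only if at every periodic point $p$ of period $n(p)$ one has
$$
\lambda^c(g,p) := \frac{1}{n(p)}\log J^c(g^{n(p)})(p) = \lambda^c(L).
$$
Equivalently, non-absolute continuity is detected by a \emph{single} periodic orbit whose center exponent disagrees with that of the linearization. Define
$$
\mathcal V = \bigl\{ g \in \UL : \exists\, p \in \Per(g) \text{ with } \lambda^c(g,p) \neq \lambda^c(L)\bigr\}.
$$
It then suffices to verify that $\mathcal V$ is $C^1$-open and $C^r$-dense in $\UL$.

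The $C^1$-openness is immediate. Every $g \in \UL$ is Anosov and conjugate to $L$, so each periodic orbit is hyperbolic and persists with a unique continuation $p(g')$ under small $C^1$ perturbations, while $g' \mapsto \lambda^c(g',p(g'))$ is continuous in the $C^1$ topology. Hence the strict inequality $\lambda^c(g,p) \neq \lambda^c(L)$ is preserved on a $C^1$-neighborhood of $g$.

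For $C^r$-density, given any $g_0 \in \UL$, density of $\Per(g_0)$ (Anosov) supplies a periodic point $p_0$. If $\lambda^c(g_0,p_0) \neq \lambda^c(L)$ we are done; otherwise we compose $g_0$ with a volume-preserving diffeomorphism $h$ supported in a small ball $B$ that meets the orbit of $p_0$ only at $p_0$, fixes $p_0$, and whose differential at $p_0$ is the diagonal matrix
$$
Dh(p_0) = \mathrm{diag}\bigl(1,\,1+\eta,\,(1+\eta)^{-1}\bigr)
$$
in the frame $(E^s_{g_0}(p_0), E^c_{g_0}(p_0), E^u_{g_0}(p_0))$, with $\eta \neq 0$ small. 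Since the partially hyperbolic splitting of $g_0$ at $p_0$ is preserved by $Dh(p_0)$, the new diffeomorphism $g = h \circ g_0$ lies in $\UL$, still has $p_0$ as a periodic point of period $n(p_0)$, the $g$-invariant center line at $p_0$ coincides with $E^c_{g_0}(p_0)$, and
$$
\lambda^c(g,p_0) = \lambda^c(g_0,p_0) + \tfrac{1}{n(p_0)}\log(1+\eta) \neq \lambda^c(L).
$$
The $C^r$-norm of $h - \mathrm{Id}$ is controlled by $\eta$ and the size of $B$, so it can be made arbitrarily small, yielding $g \in \mathcal V$ arbitrarily $C^r$-close to $g_0$.

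The principal obstacle is the density step. Unlike the Baraviera-Bonatti perturbations of Section~\ref{subs_BB}, which are intrinsically $C^1$ because the homothety conjugacy $\phi_r$ used to localize $h$ degrades higher derivatives, here we need genuine $C^r$-smallness of the perturbation. This is manageable precisely because we need to modify only first-order derivative data at a \emph{single} periodic orbit---not an average over a positive-volume set---so an explicit diagonal volume-preserving model multiplied by a smooth cut-off supported in $B$ suffices, with $C^r$-norm directly controlled by $\eta$. The remaining ingredients---persistence of hyperbolic periodic orbits, $C^1$-continuity of their Oseledets data, and density of $\Per(g_0)$---are standard consequences of the Anosov property enjoyed by elements of $\UL$.
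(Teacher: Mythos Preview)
Your proposal is correct and follows precisely the route the paper indicates: the surrounding text states that the theorem ``is a corollary of this description,'' referring to the periodic-data characterization from~\cite{G12}, and gives no further proof. Your argument supplies exactly the missing details---defining $\mathcal V$ via the periodic-point criterion, checking $C^1$-openness by continuation of hyperbolic periodic orbits, and obtaining $C^r$-density by a localized volume-preserving perturbation of the center eigenvalue at a single periodic point---so there is nothing to compare.
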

 Recently, R. Var\~ao~\cite{Va} showed that there exist Anosov diffeomorphisms in $\mathcal V$ with non-absolutely continuous center foliation which does not have atomic disintegration.



So far we have concentrated on diffeomorphism which are $C^1$ close to $L$.
Recall that a diffeomorphism $f$ is {\it derived from Anosov} (or simply DA) diffeomorphism if it is homotopic to an Anosov diffeomorphism. The following recent result of Ponce, Tahzibi and Var\~ao~\cite{PTV} establishes atomic disinegration for a class of (non-Anosov) partially hyperbolic DA diffeomorphisms.

\begin{theorem} 
Let $L$ be a hyperbolic automorphism of $\TT^3$ with three distinct Lyapunov exponents $\lambda^s(L)<\lambda^c(L)
<\lambda^u(L)$. 
Let $f\colon\mathbb{T}^3 \rightarrow \mathbb{T}^3$ be volume preserving DA diffeomorphism (homotopic to $L$). Assume that $f$ is partially hyperbolic, volume preserving and ergodic.  Also assume that $\lambda^c(f) \lambda^c(L) < 0$ then the disintegration of volume along center leaves of $f$ is atomic and in fact there is just one atom per leaf. 
\end{theorem}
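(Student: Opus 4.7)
The plan is to analyze the Franks semiconjugacy $H\colon\mathbb{T}^3\to\mathbb{T}^3$ with $H\circ f=L\circ H$, whose lift $\tilde H\colon\mathbb{R}^3\to\mathbb{R}^3$ sits at bounded $C^0$-distance $K$ from the identity. Without loss of generality assume $\lambda^c(L)>0$ and $\lambda^c(f)<0$. For partially hyperbolic DA diffeomorphisms on $\mathbb{T}^3$, $H$ sends center leaves of $f$ into center leaves of $L$; the latter are affine lines parallel to the central eigenspace of $L$ on which $L$ acts by multiplication by $e^{\lambda^c(L)}>1$.

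The key input is the following local \emph{collapsing lemma}: for $m$-a.e.\ $x\in\mathbb{T}^3$, the fiber $H^{-1}(H(x))\cap W^c_f(x)$ contains a neighborhood of $x$ in the leaf. To see it, take $\tilde x\in\mathbb{R}^3$ in a Pesin block so that its center exponent equals $\lambda^c(f)<0$. Then a small neighborhood $U\subset\tilde W^c_f(\tilde x)$ of $\tilde x$ lies in the Pesin stable manifold, so every $\tilde y\in U$ satisfies $d^c(\tilde f^n\tilde x,\tilde f^n\tilde y)\to 0$. Together with $\|\tilde H-\mathrm{Id}\|_{C^0}\le K$ this gives $d(L^n\tilde H\tilde x,L^n\tilde H\tilde y)=d(\tilde H\tilde f^n\tilde x,\tilde H\tilde f^n\tilde y)\to 0$. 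But $\tilde H\tilde x$ and $\tilde H\tilde y$ lie on a common affine $L$-center leaf along which $L^n$ multiplies distances by $e^{n\lambda^c(L)}\to\infty$; the only way for these images to stay close is $\tilde H\tilde x=\tilde H\tilde y$, so $U\subset\tilde H^{-1}(\tilde H\tilde x)$.

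The step I expect to be the main obstacle is upgrading this local collapsing to a \emph{global} one: $m$-a.e.\ center leaf $W^c_f(x)$ is collapsed by $H$ to the single point $H(x)$. My plan is to exploit that the set $G=\{x:W^c_f(x)\subset H^{-1}(H(x))\}$ is $f$-invariant and saturated by the strong stable and strong unstable foliations of $f$. Absolute continuity of $W^s_f$ and $W^u_f$ then allows transporting the local collapsing along $su$-holonomies to obtain a set of positive $m$-measure, and ergodicity of $f$ upgrades $m(G)$ to $1$. Once this is in place, the measurable partition into center leaves agrees $m$-mod $0$ with the partition into $H$-fibers.

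With these ingredients the conclusion follows from a Pesin/Poincar\'e-recurrence argument in the spirit of Proposition~\ref{prop_atomic_measure} and Theorem~\ref{th_ruelle_wilkinson}. Since $\lambda^c(f)<0$, at a generic $x$ the $2$-dimensional Pesin stable manifold contains an arc of $W^c_f(x)$ through $x$, and successive returns to a Pesin block produce a shrinking sequence of center neighborhoods of $x$ with uniform lower bound on $\mu^c_x$-mass; any weak$^*$ accumulation yields an atom of $\mu^c_x$. Ergodicity then forces $\mu^c_x$ to be purely atomic for $m$-a.e.\ $x$, and the coincidence between $H$-fibers and center leaves established in the previous step cuts the number of atoms to exactly one per leaf, with the selection $x\mapsto p(x)\in W^c_f(x)$ singled out by $H(p(x))=H(x)$ and $f$-equivariance.
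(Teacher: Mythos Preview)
The survey does not actually prove this theorem; it is only stated with a citation to Ponce--Tahzibi--Var\~ao. So there is no proof in the paper to compare against. That said, your proposal contains a genuine gap at the ``global collapsing'' step, and it cannot be repaired as written.

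Your claim that for $m$-a.e.\ $x$ the \emph{entire} center leaf $W^c_f(x)$ is collapsed by $H$ to the single point $H(x)$ is impossible. Lift to $\mathbb{R}^3$: a center leaf $\tilde W^c_f(\tilde x)$ is an unbounded curve (it stays within bounded distance of an affine $L$-center line, precisely because $\|\tilde H-\mathrm{Id}\|_{C^0}\le K$). If $\tilde H$ collapsed the whole leaf to a point, that leaf would lie in a ball of radius $K$, a contradiction. Equivalently, $H$ is surjective onto $\TT^3$; if it were constant on each center leaf it would factor through the two-dimensional leaf space and could not be onto. Hence your set $G=\{x:W^c_f(x)\subset H^{-1}(H(x))\}$ is empty, and the $su$-saturation/ergodicity upgrade never gets off the ground. (The $su$-saturation is also unjustified: $H$ is typically injective along strong stable and strong unstable leaves, and there is no mechanism that propagates whole-leaf collapse across an $su$-holonomy.)

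What your collapsing lemma really gives is that the $H$-fiber through a typical $x$ is a \emph{bounded, non-degenerate arc} $I_x\subset W^c_f(x)$, not the whole leaf. The partition into such arcs is the fiber partition of the continuous map $H$ and is measurable. The substantive step you are missing is to show that a full-measure $f$-invariant set meets each center leaf in exactly one such arc; this is what makes the (a priori non-measurable, possibly minimal) center foliation coincide $m$-mod $0$ with the measurable $H$-fiber partition. Once that is in place, your endgame is essentially correct: the Pesin/recurrence argument in the style of Theorem~\ref{th_ruelle_wilkinson} applied to the arcs $I_x$ produces one atom per arc, hence one atom per center leaf. So your local step and your conclusion are on the right track; the bridge between them has to go through bounded $H$-fiber arcs, not whole-leaf collapse.
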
 

The above theorem  can be verified for an open class of diffeomorphisms found by  Ponce-Tahzibi in \cite{PT}.

It is interesting to emphasize that conservative ergodic DA diffeomorphism on $\mathbb T^3$ show a feature that is not, so far, shared with any other known partially hyperbolic diffeomorphims on dimension three, it admits all three disintegration of volume on the center leaf, namely: Lebesgue, atomic, and singular non-atomic (by a recent result of R. Var\~ao~\cite{Va}. We also remark that Katok's example (cf. Remark~\ref{rm_katok}) can be modified to give singular non-atomic conditionals. However this example is not ergodic.

\end{document}